\numberwithin{equation}{section}
\newcommand{\shortmathcal}[1]{\@tfor\ch:=#1\do{
\expandafter\edef\csname c\ch\endcsname{\noexpand\mathcal{\ch}}
}}
\newcommand{\shortmathbb}[1]{\@tfor\ch:=#1\do{
\expandafter\edef\csname bb\ch\endcsname{\noexpand\mathbb{\ch}}
}}
\newcommand{\shortmathbf}[1]{\@tfor\ch:=#1\do{
\expandafter\edef\csname b\ch\endcsname{\noexpand\mathbf{\ch}}
}}
\newcommand{\shortboldsymbol}[1]{\@tfor\ch:=#1\do{
\expandafter\edef\csname bs\ch\endcsname{\noexpand\boldsymbol{\ch}}
}}
\newcommand{\shortmathfrak}[1]{\@tfor\ch:=#1\do{
\expandafter\edef\csname f\ch\endcsname{\noexpand\mathfrak{\ch}}}}
\newcommand{\shortmathscr}[1]{\@tfor\ch:=#1\do{
\expandafter\edef\csname s\ch\endcsname{\noexpand\mathscr{\ch}}}}
\newcommand{\shortmathrm}[1]{\@tfor\ch:=#1\do{
\expandafter\edef\csname r\ch\endcsname{\noexpand\mathrm{\ch}}
}}
\newcommand{\AP}{\mathsf{AP}}
\newcommand{\jump}[1]{\ensuremath{[\![#1]\!]}}
\newcommand{\ncjump}[1]{\ensuremath{\langle\!\langle #1\rangle\!\rangle}}
\newcommand{\wt}{\mathrm{wt}}
\newtheorem{theorem}{Theorem}[section]
\newtheorem{lemma}[theorem]{Lemma}
\theoremstyle{definition}
\newtheorem{definition}[theorem]{Definition}
\theoremstyle{remark}
\newtheorem{remark}[theorem]{Remark}
\title{Formal sine functions in harmonic algebra}
\author[1]{Hanamichi Kawamura\corref{me}
\fnref{email}}
\ead{1121026@ed.tus.ac.jp}
\affiliation[1]{organization={Department of Mathematics, Faculty of Science Division I, Tokyo University of Science},
addressline={1-3 Kagurazaka, Shinjuku-ku},
postcode={162-8601},
city={Tokyo},
country={Japan}}
\begin{document}
\begin{abstract}
    In this paper, we introduce formal sine functions whose coefficients are elements of a generalized harmonic algebra and investigate their properties corresponding to the classical addition formula and Pythagorean theorem.
    By taking their image under a linear map with some conditions, they coincide with the classical sine function.
    Moreover, we show that one such linear map is obtained from iterated integrals.
\end{abstract}
\begin{keyword}
    sine function \sep addition formula \sep Pythagorean theorem \sep harmonic algebra
    \end{keyword}
\maketitle
\section{Introduction}\label{sec:introduction}
Both the following theorems are most celebrated and classical formulas for trigonometric functions:
\begin{theorem}[Addition formula]\label{thm:classical_af}
The identity $\sin(x+y)=\sin x\cos y+\cos x\sin y$ holds for $x,y\in\bbR$.
\end{theorem}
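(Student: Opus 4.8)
The plan is to deduce the addition formula from the multiplicativity of the exponential function, itself a consequence of the Cauchy product of absolutely convergent power series. First I would take as definitions the series $\exp z=\sum_{n\ge 0}z^n/n!$, $\sin x=\sum_{n\ge 0}(-1)^n x^{2n+1}/(2n+1)!$ and $\cos x=\sum_{n\ge 0}(-1)^n x^{2n}/(2n)!$, all of which converge absolutely for $z\in\bbC$, $x\in\bbR$. Separating $\exp(ix)$ into its even and odd terms gives the Euler-type identities $\cos x=\tfrac12\bigl(\exp(ix)+\exp(-ix)\bigr)$ and $\sin x=\tfrac{1}{2i}\bigl(\exp(ix)-\exp(-ix)\bigr)$. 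Independently, forming the Cauchy product of $\exp z$ and $\exp w$ and invoking the binomial theorem $\sum_{k=0}^n\binom{n}{k}z^kw^{n-k}=(z+w)^n$ termwise yields $\exp(z+w)=\exp z\cdot\exp w$; in particular $\exp(-ix)=\exp(ix)^{-1}$.

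With these facts in hand the rest is a short computation. Set $a=\exp(ix)$ and $b=\exp(iy)$, so that $\exp\bigl(i(x+y)\bigr)=ab$ and $\exp\bigl(-i(x+y)\bigr)=a^{-1}b^{-1}$, whence
\[
\sin(x+y)=\frac{ab-a^{-1}b^{-1}}{2i}.
\]
On the other side, the Euler formulas give
\[
\sin x\cos y+\cos x\sin y=\frac{1}{4i}\Bigl((a-a^{-1})(b+b^{-1})+(a+a^{-1})(b-b^{-1})\Bigr),
\]
and expanding the two products, the terms $ab^{-1}$ and $a^{-1}b$ occur with opposite signs and cancel, leaving $\tfrac{1}{4i}\bigl(2ab-2a^{-1}b^{-1}\bigr)$, which is exactly the right-hand side above. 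Hence the two expressions agree for all $x,y\in\bbR$.

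The only point requiring genuine care is the rearrangement used to pass from the double sum $\sum_{m,n}z^m w^n/(m!\,n!)$ to $\sum_k(z+w)^k/k!$; absolute convergence of the exponential series makes this legitimate, and it is the sole analytic input. If one prefers to avoid series altogether, an alternative is to fix $y$ and note that $x\mapsto\sin(x+y)$ and $x\mapsto\sin x\cos y+\cos x\sin y$ both solve the initial value problem $f''=-f$ with $f(0)=\sin y$ and $f'(0)=\cos y$, so uniqueness of solutions to this linear ODE forces equality. I would nonetheless favour the exponential argument, because its shape --- multiplicativity of a group-like series followed by extraction of sine and cosine as linear combinations of exponentials --- is exactly the template the formal sine functions of this paper are built to reproduce.
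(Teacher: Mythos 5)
Your proof is correct, but it is a genuinely different route from the one the paper takes. The paper states the addition formula as a known classical fact and never proves it directly from the series for $\sin$ and $\cos$; instead it \emph{recovers} it from the purely algebraic identity $A_{z}(x,y)=0$ in $(\fH_{M,\ast}/\AP_{z})\jump{x,y}$ (Theorem \ref{thm:main_af}), specialized to $z=1$ and pushed through the iterated-integral homomorphism $Z$ of Section \ref{sec:ii}, whose kernel is $\AP_{1}$-ample because $Z(w_{1,n})=\pi^{2n}$. Your argument --- Euler's formulas plus multiplicativity of $\exp$, justified by the Cauchy product --- is a complete, self-contained analytic proof, and your computation with $a=\exp(ix)$, $b=\exp(iy)$ checks out. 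Your closing observation is exactly the right way to see the relationship between the two: the relation $Z(w_{1,m+n})=Z(w_{1,m})Z(w_{1,n})$ (i.e.\ $\pi^{2(m+n)}=\pi^{2m}\pi^{2n}$) is the algebraic avatar of $\exp(z+w)=\exp z\cdot\exp w$, and the coefficient identity \eqref{eq:af_coefficient} is the formal shadow of your binomial-theorem rearrangement. What the paper's detour buys is generality: Theorem \ref{thm:main_af} shows the addition formula holds for \emph{any} algebra homomorphism $c$ with $c(w_{1,n})=\mu^{2n}/(2n+1)!$ (e.g.\ associators, as in Remark \ref{rem:associator}), and, read back through Theorem \ref{thm:harmonic}, it encodes a family of linear relations among multiple zeta values such as $4\zeta(2,2)=3\zeta(4)$. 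Your proof buys brevity and elementarity, but only for the classical analytic sine; it would not be an acceptable substitute for the paper's mechanism, though it is a perfectly good proof of Theorem \ref{thm:classical_af} as stated.
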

\begin{theorem}[Pythagorean theorem]\label{thm:classical_pt}
The identity $\sin^{2}x+\cos^{2}x=1$ holds for $x\in\bbR$.
\end{theorem}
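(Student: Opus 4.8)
The plan is to reduce Theorem \ref{thm:classical_pt} to the elementary fact that a differentiable function on the connected set $\bbR$ with identically vanishing derivative is constant. Concretely, I would introduce the auxiliary function
\[
  f(x) := \sin^{2}x + \cos^{2}x, \qquad x\in\bbR,
\]
and show that $f'\equiv 0$ and $f(0)=1$.

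The key steps, in order, are: (1) recall the derivative rules $(\sin)'=\cos$ and $(\cos)'=-\sin$ on $\bbR$, which are standard for whichever of the usual definitions of $\sin,\cos$ one takes as primitive (the power series $\sin x=\sum_{n\ge 0}(-1)^{n}x^{2n+1}/(2n+1)!$ and $\cos x=\sum_{n\ge 0}(-1)^{n}x^{2n}/(2n)!$, the exponential definition via $e^{ix}$, or the geometric arc-length definition); (2) apply the chain rule to obtain $f'(x)=2\sin x\cos x-2\cos x\sin x=0$ for every $x\in\bbR$; (3) conclude from connectedness of $\bbR$ that $f$ is constant; (4) evaluate at $x=0$, where $\sin 0=0$ and $\cos 0=1$, to get $f(x)=f(0)=1$, which is the asserted identity.

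Two alternatives are worth recording. The first, closer to the formal manipulations used later in the paper, starts from Euler's identities $\cos x=(e^{ix}+e^{-ix})/2$ and $\sin x=(e^{ix}-e^{-ix})/(2i)$ and expands
\[
  \cos^{2}x+\sin^{2}x=\tfrac14\bigl(e^{ix}+e^{-ix}\bigr)^{2}-\tfrac14\bigl(e^{ix}-e^{-ix}\bigr)^{2}=e^{ix}e^{-ix}=e^{0}=1 .
\]
The second uses Theorem \ref{thm:classical_af}: differentiating $\sin(x+y)$ with respect to $y$ yields the cosine addition formula $\cos(x+y)=\cos x\cos y-\sin x\sin y$, and setting $y=-x$ together with the parities $\sin(-x)=-\sin x$ and $\cos(-x)=\cos x$ gives $1=\cos 0=\cos^{2}x+\sin^{2}x$.

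I do not anticipate a genuine obstacle here: the only point requiring care is fixing which definition of $\sin$ and $\cos$ is primitive and checking the single corresponding basic input (the derivative rule, or $e^{ix}e^{-ix}=1$, or the sine addition formula together with the parities); after that the argument is a one-line computation. The substantive work of the paper is the formal-algebra analogue of this identity in the harmonic algebra, not the classical statement itself.
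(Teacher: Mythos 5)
Your proof is correct: any of the three routes you sketch (vanishing derivative of $f(x)=\sin^{2}x+\cos^{2}x$ plus $f(0)=1$, the expansion via $e^{ix}e^{-ix}=1$, or specializing the cosine addition formula at $y=-x$) is a complete and standard argument, and the only care needed is the one you flag, namely fixing which definition of $\sin$ and $\cos$ is primitive. However, it is genuinely different from how the paper obtains this identity. The paper states Theorem \ref{thm:classical_pt} as a known classical fact and never proves it by calculus; instead, its point is to \emph{recover} it from the harmonic algebra: Theorem \ref{thm:main_pt} establishes the formal identity $-w_{z,1}\ast\sS_{z}(x)^{\ast 2}+\sC_{z}(x)^{\ast 2}=1$ modulo any $\AP_{z}$-ample ideal by an explicit coefficient computation (reducing each coefficient of $P_{z}(x)$ to a combination of the generators $g_{z,m,n}$ together with an alternating binomial sum that vanishes for $N\ge 1$), and Section \ref{sec:ii} constructs the iterated-integral homomorphism $Z$ with $Z(w_{1,n})=\pi^{2n}$, so that $\ker Z$ is $\AP_{1}$-ample and applying $Z$ yields $\pi^{2}(\sin(\pi x)/\pi)^{2}+\cos^{2}(\pi x)=1$. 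Your third alternative (deducing the Pythagorean identity from the addition formula) is closest in spirit to the paper's Theorem \ref{thm:main}, which proves the two are equivalent at the formal level, but even there the paper argues by comparing coefficients involving $g_{z,m,n}$ rather than by differentiating and setting $y=-x$. What the paper's longer route buys is precisely the content of the paper: each coefficient of the formal identity is a nontrivial linear relation among multiple zeta values (for example $4\zeta(2,2)=3\zeta(4)$ from the $x^{2}$-coefficient), which your elementary proofs do not produce.
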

Considering $\cos$ as the derivative of $\sin$, these formulas are functional relations for the sine function.
Moreover, the sine function has two expressions that known as the \emph{Taylor series} 
\begin{equation}\label{eq:sin_series}
    \frac{\sin (\pi x)}{\pi}=\sum_{n=0}^{\infty}(-1)^{n}\frac{\pi^{2n}}{(2n+1)!}x^{2n+1}
\end{equation}
and as the \emph{reflection formula}
\begin{equation}\label{eq:sin_reflection}
    \frac{\sin (\pi x)}{\pi}=\frac{x}{\Gamma(x)\Gamma(1-x)},
\end{equation}
respectively, where $\Gamma(x)$ denotes the gamma function.
The ingredients of these expressions are viewed as \emph{multiple zeta values}; indeed, $\pi^{2r}/(2r+1)!$ coincides with the special case $(k_{1},\ldots,k_{r})=(2,\ldots,2)$ of the multiple zeta value
\[\zeta(k_{1},\ldots,k_{r})\coloneqq\sum_{0<n_{1}<\cdots<n_{r}}\frac{1}{n_{1}^{k_{1}}\cdots n_{r}^{k_{r}}},\qquad (k_{1},\ldots,k_{r-1}\ge 1,~k_{r}\ge 2)\]
and the logarithm of the gamma function essentially gives a generating function of Riemann zeta values.
From these facts, we can view both \eqref{eq:sin_series} and \eqref{eq:sin_reflection} as expressions of $\sin(\pi x)/\pi$ as a power series whose coefficients are multiple zeta values.
In this paper, we define two types of a \emph{formal sine function}, $\sS_{z,k}^{\rR}(z)$ and $\sS_{z,k}^{\rT}(z)$, in a generalization of Hoffman's \emph{harmonic algebra} and investigate their properties: the addition formula and Pythagorean theorem.
Since multiple zeta values are obtained by computing \emph{iterated integrals} for elements of harmonic algebra, we can recover such classical formulas for the sine function from those properties.
Our main theorem is the following (we give the definitions of some symbols later. Especially, for the element $w_{z,n}$, the formal sine function $\sS_{z}(x)$ and formal cosine function $\sC_{z}(x)$, see the beginning of Section \ref{sec:af}).
\begin{theorem}\label{thm:main}
    Let $M$ be a monoid with a zero element, $z$ its element and $\AP_{z}$ the ideal of the harmonic algebra $\fH_{M,\ast}$ (with respect to $M$) generated by elements of the form $w_{z,m+n}-w_{z,m}\ast w_{z,n}$ with $m,n\in\bbZ_{\ge 0}$.
    Then, for an ideal $I\subseteq\fH_{M,\ast}$, the following are equivalent.
    \begin{enumerate}[\upshape (i)]
        \item $I$ is $\AP_{z}$-ample; that is, $\AP_{z}\subseteq I$.
        \item A modified addition formula
        \[\sS_{z}(x+y)=\sS_{z}(x)\ast\sC_{z}(y)+\sC_{z}(x)\ast \sS_{z}(y)\]
        holds in $(\fH_{M,\ast}/I)\jump{x,y}$.
        \item A modified Pythagorean theorem
        \[-w_{z,1}\ast\sS_{z}(x)^{\ast 2}+\sC_{z}(x)^{\ast 2}=1\]
        holds in $(\fH_{M,\ast}/I)\jump{x}$.
    \end{enumerate}
\end{theorem}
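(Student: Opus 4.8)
The plan is to funnel all three conditions through a single auxiliary statement --- a second-order linear recursion for $\sS_z$ in the quotient --- and to power everything by an elementary uniqueness lemma for formal power series over a commutative $\bbQ$-algebra. Recall from Section~\ref{sec:af} that
\[
\sS_z(x)=\sum_{n\ge 0}\frac{w_{z,n}}{(2n+1)!}\,x^{2n+1},\qquad
\sC_z(x)=\frac{d}{dx}\sS_z(x)=\sum_{n\ge 0}\frac{w_{z,n}}{(2n)!}\,x^{2n},
\]
that $w_{z,0}=1$, and that $\fH_{M,\ast}$ is a commutative $\bbQ$-algebra. Fix an ideal $I$, put $A\coloneqq\fH_{M,\ast}/I$, and let $S,C\in A\jump{x}$ and $v\in A$ be the images of $\sS_z(x)$, $\sC_z(x)$ and $w_{z,1}$; then $C=S'$, $S(0)=0$, $S'(0)=C(0)=1$ and $C''(0)=v$, the last two because $\sC_z(0)=w_{z,0}=1$ and $\sC_z''(0)=w_{z,1}$.

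The engine I would first establish is this: for a commutative $\bbQ$-algebra $R$ and $u\in R$, the coefficient recursion $(k+2)(k+1)f_{k+2}=u\ast f_k$ shows that $f''=u\ast f$ together with prescribed $f(0)$ and $f'(0)$ has a unique solution in $R\jump{x}$; in particular the one with $f(0)=0$, $f'(0)=1$ is $f_u(x)=\sum_{n\ge0}u^{\ast n}x^{2n+1}/(2n+1)!$, with derivative $g_u\coloneqq f_u'=\sum_{n\ge0}u^{\ast n}x^{2n}/(2n)!$. From this one deduces, by differentiating and invoking the uniqueness just stated, the two formulas $g_u^{\ast2}-u\ast f_u^{\ast2}=1$ in $R\jump{x}$ (its $x$-derivative equals $2g_u\ast(g_u'-u\ast f_u)=0$ and its constant term is $1$) and $f_u(x+y)=f_u(x)\ast g_u(y)+g_u(x)\ast f_u(y)$ in $R\jump{x,y}$ (the difference, read in $R\jump{y}\jump{x}$, solves $h''=u\ast h$ with $h(0)=h'(0)=0$, so it vanishes).

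With that in hand I would introduce the auxiliary statement $(\dagger)$: the identity $S''=v\ast S$ holds in $A\jump{x}$, and prove $(\mathrm{i})\Leftrightarrow(\dagger)$, $(\dagger)\Leftrightarrow(\mathrm{ii})$ and $(\dagger)\Leftrightarrow(\mathrm{iii})$. For $(\mathrm{i})\Rightarrow(\dagger)$: if $\AP_z\subseteq I$, then telescoping the generators $w_{z,k}-w_{z,k-1}\ast w_{z,1}$ shows $w_{z,n}$ maps to $v^{\ast n}$ in $A$, so $S=f_v$, which satisfies $(\dagger)$ by the engine. For $(\dagger)\Rightarrow(\mathrm{i})$: the recursion $(\dagger)$ with $S(0)=0$, $S'(0)=1$ forces $S=f_v$ by the uniqueness part; comparing coefficients of $x^{2n+1}$ and dividing by $(2n+1)!$ shows $w_{z,n}$ maps to $v^{\ast n}$ in $A$, hence every generator $w_{z,m+n}-w_{z,m}\ast w_{z,n}$ of $\AP_z$ maps to $v^{\ast(m+n)}-v^{\ast m}\ast v^{\ast n}=0$, i.e.\ $\AP_z\subseteq I$. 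For $(\dagger)\Rightarrow(\mathrm{ii})$ and $(\dagger)\Rightarrow(\mathrm{iii})$: $S=f_v$ gives $C=S'=g_v$, and then the modified addition formula and modified Pythagorean theorem are exactly the two formulas supplied by the engine. For $(\mathrm{ii})\Rightarrow(\dagger)$: apply $\partial_y^2$ to $S(x+y)=S(x)\ast C(y)+C(x)\ast S(y)$ and set $y=0$; using $C''(0)=v$ and $S''(0)=0$ this becomes $S''(x)=v\ast S(x)$. For $(\mathrm{iii})\Rightarrow(\dagger)$: differentiate $-v\ast S^{\ast2}+C^{\ast2}=1$ in $A\jump{x}$ and use $C=S'$ to obtain $2\,C\ast(C'-v\ast S)=0$; since $C$ has constant term $1$ it is a unit in $A\jump{x}$ (and $2$ is invertible), so $C'=v\ast S$, that is $S''=v\ast S$.

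The load-bearing but otherwise routine ingredient is the engine lemma, in particular the addition formula for $f_u,g_u$ over an arbitrary commutative $\bbQ$-algebra, where $R\jump{x}$ may well have zero divisors --- which is exactly why I would phrase the whole argument through the linear recursion $f''=u\ast f$ rather than through square roots of $u$ or hyperbolic functions. The only other step needing more than bookkeeping is the cancellation of $C$ in $(\mathrm{iii})\Rightarrow(\dagger)$, legitimate precisely because $C$ has invertible constant term in $A\jump{x}$; and I would carefully check that the normalizations $\sC_z(0)=1$ and $\sC_z''(0)=w_{z,1}$ are what the definitions in Section~\ref{sec:af} actually give, since the entire argument hinges on $w_{z,1}$ being the relevant Taylor coefficient of $\sC_z$.
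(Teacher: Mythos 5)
Your proposal is correct, and it takes a genuinely different route from the paper. The paper proves (i)$\Leftrightarrow$(ii) and (i)$\Leftrightarrow$(iii) separately by brute-force coefficient extraction: for the addition formula it shows that $C(A_{z};x^{i}y^{2N+1-i})$ is, up to an explicit factorial factor, literally one of the generators $g_{z,m,n}$ of $\AP_{z}$ (so the equivalence is immediate in both directions), while for the Pythagorean theorem the forward direction is a binomial-coefficient computation and the converse is a separate induction showing $w_{z,n}=w_{z,1}^{\ast n}$ in the quotient. You instead route everything through the single auxiliary condition $S''=v\ast S$ and a uniqueness lemma for the formal second-order linear recursion over an arbitrary commutative $\bbQ$-algebra; all the checks you rely on are sound (the normalizations $\sC_{z}(0)=w_{z,0}=1$ and $\sC_{z}''(0)=w_{z,1}$ do follow from the definitions in Section~\ref{sec:af}, the cancellation of $C$ in (iii)$\Rightarrow(\dagger)$ is legitimate since $C$ has unit constant term, and the telescoping $w_{z,n}\equiv v^{\ast n}$ correctly recovers all generators $g_{z,m,n}$ in $I$). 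What your approach buys is conceptual economy: it explains both classical identities as consequences of one differential equation and avoids the binomial-sum manipulations and the separate induction in the paper's proof of Theorem~\ref{thm:main_pt}. What the paper's approach buys is a finer quantitative statement for the addition formula, namely an exact dictionary between individual Taylor coefficients of $A_{z}(x,y)$ and individual generators of $\AP_{z}$, which is what makes the remark about families of linear relations among multiple zeta values (such as $4\zeta(2,2)=3\zeta(4)$) immediate coefficient by coefficient.
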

Moreover, since the kernel of the homomorphism $Z$ given by iterated integrals (constructed in Section \ref{sec:ii}) is $\AP_{1}$-ample, we have the addition formula and Pythagorean theorem for $Z(\sS_{1}(x))$, which realize classical formulas (Theorems \ref{thm:classical_af} and \ref{thm:classical_pt}).
\section{Formal sine functions based on Taylor series and reflection formula}
\subsection{Algebraic formulation}
From here to the end of Section \ref{sec:pt}, fix a monoid $M$ having a zero element.
The identity element (resp.~zero element) of $M$ is denoted by $1$ (resp.~$0$).
Denote by $\fH_{M}$ the non-commutative polynomial ring $\bbQ\langle e_{z}\mid z\in M\rangle$ generated by formal symbols $\{e_{z}\}_{z\in M}$ corresponding to elements of $M$ over $\bbQ$.
Let us endow $\fH_{M}$ a commutative and associative $\bbQ$-algebra structure.
Define the \emph{harmonic product} $\ast$ by the $\bbQ$-bilinearity and the rules $1\ast w=w\ast 1=w$ ($w\in\fH_{M}$) and
\begin{equation}\label{eq:definition_harmonic}
    e_{a}w\ast e_{b}w'=e_{ab}(w\ast e_{b}w'+e_{a}w\ast w'-e_{0}(w\ast w')),
\end{equation}
for $a,b\in M$ and $w,w'\in\fH_{M}$.
Hereafter, when we encounter a notation denoting a product with $\ast$ and the usual concatenation product together without parentheses, like $w\ast e_{b}w'$ above, the harmonic product is always prior to the concatenation.
\begin{remark}
This definition is introduced by Hirose--Sato \cite{hs20} as a generalization of the original harmonic product \cite[Section 2]{hoffman97}.
For a proof of the commutativity and associativity of $\ast$, see \cite[Proposition 5 (ii)]{hs20}.
\end{remark}
When we consider $\fH_{M}$ as a commutative associative $\bbQ$-algebra equipped with $\ast$, denote it by $\fH_{M,\ast}$.
We sometimes mention $\fH_{M,\ast}$ as the \emph{harmonic algebra}.
\begin{lemma}\label{lem:harmonic_e0}
    For every $w\in\fH_{M}$ and a non-negative integer $m$, we have $e_{0}^{m}\ast w=e_{0}^{m}w$.
\end{lemma}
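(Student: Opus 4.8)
The plan is to reduce everything to the single‑letter case $e_0\ast w=e_0w$ and then bootstrap to arbitrary $m$ using associativity of $\ast$. For Step~1 I would prove $e_0\ast w=e_0w$ for every $w\in\fH_M$ by induction on the number of letters in $w$, which by $\bbQ$-bilinearity of $\ast$ suffices to treat $w$ a word. The empty word gives $e_0\ast 1=e_0$ directly. For a word $w=e_bw'$ with $b\in M$, applying the defining relation \eqref{eq:definition_harmonic} (with the left factor $e_0=e_0\cdot 1$, so the ``tail'' of the left factor is empty) yields
\[
e_0\ast e_bw'=e_{0b}\bigl(1\ast e_bw'+e_0\ast w'-e_0(1\ast w')\bigr).
\]
Here $0b=0$ because $0$ is the zero element of $M$, and the induction hypothesis applied to the shorter word $w'$ gives $e_0\ast w'=e_0w'$; substituting, the parenthesis collapses to $e_bw'+e_0w'-e_0w'=e_bw'$, so the right‑hand side equals $e_0e_bw'=e_0w$.

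For Step~2, I would induct on $m$, the case $m=0$ being $1\ast w=w$. For $m\ge 1$, Step~1 applied to $e_0^{m-1}$ gives $e_0\ast e_0^{m-1}=e_0^m$, hence, using the associativity of $\ast$ (\cite[Proposition~5(ii)]{hs20}) together with the induction hypothesis,
\[
e_0^m\ast w=\bigl(e_0\ast e_0^{m-1}\bigr)\ast w=e_0\ast\bigl(e_0^{m-1}\ast w\bigr)=e_0\ast\bigl(e_0^{m-1}w\bigr)=e_0\bigl(e_0^{m-1}w\bigr)=e_0^mw,
\]
where the fourth equality is again Step~1. This closes the induction.

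I do not expect a genuine obstacle here; the only subtle points are organizing the word‑length induction correctly and invoking the absorbing property $0b=0$ so that the prefactor $e_{0b}$ in \eqref{eq:definition_harmonic} is exactly $e_0$. One could instead run a single induction on the pair (the exponent $m$, the length of $w$), but separating the two steps keeps the argument transparent and pinpoints precisely where associativity enters.
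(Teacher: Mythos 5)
Your proof is correct and follows essentially the same route as the paper: induction on the length of the word $w$ for the case $m=1$, using the defining relation \eqref{eq:definition_harmonic} with left factor $e_{0}$ and the absorbing property $0b=0$. Your Step~2 (reducing general $m$ to $m=1$ via associativity of $\ast$) is spelled out more explicitly than in the paper, which simply asserts that the case $m=1$ suffices, but this is a welcome clarification rather than a divergence.
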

\begin{proof}
    By the linearity, it is sufficient to give a proof for the case where $m=1$ and $w$ is a word $e_{a_{1}}\cdots e_{a_{k}}$.
    We prove it by induction on $k$.
    The $k=0$ case is clear from the definition.
    When it holds for some $k\ge 0$, for $a\in M$ we see that
    \[
    e_{0}\ast e_{a}w
    =e_{0}(1\ast e_{a}w+e_{0}\ast w-e_{0}(1\ast w))\\
    =e_{0}e_{a}w+e_{0}(e_{0}\ast w-e_{0}w),
    \]
    and the last term vanishes by the induction hypothesis.
\end{proof}
\subsection{Definitions of formal sine functions}
First we define the formal sine function based on the Taylor series and reflection formula.
For $z\in M$ and a positive integer $k$, put $s_{z,k}\coloneqq e_{z}e_{0}^{k-1}$.
Using this notation, we see that the rule of the harmonic product \eqref{eq:definition_harmonic} is equivalent to
\begin{equation}\label{eq:usual_harmonic}
    s_{a,k}w\ast s_{b,l}w'=s_{ab,k}(w\ast s_{b,l}w')+s_{ab,l}(s_{a,k}w\ast w')-s_{ab,k+l}(w\ast w'),
\end{equation}
by Lemma \ref{lem:harmonic_e0}.
Note that there is a similar statement in \cite[Proposition 5 (i)]{hs20}.
\begin{definition}
    For a positive integer $k$ and $z\in M$, we define two elements $\sS_{z,k}^{\rT}$ and $\sS_{z,k}^{\rR}$ of $\fH_{M,\ast}\jump{x}$ by
    \[\sS_{z,k}^{\rT}(x)\coloneqq\sum_{n=0}^{\infty}s_{z^{n},k}s_{z^{n-1},k}\cdots s_{z,k}x^{(n+1)k-1}\]
    and    
    \[\sS_{z,k}^{\rR}(x)\coloneqq x^{k-1}\exp_{\ast}\left(\sum_{n=1}^{\infty}\frac{s_{z^{n},nk}}{n}x^{nk}\right),\]
    where
    \[\exp_{\ast}\colon x\fH_{M,\ast}\jump{x}\to \fH_{M,\ast}\jump{x};~f\mapsto 1+\sum_{n>0}\underbrace{f\ast\cdots \ast f}_{n}\frac{1}{n!}
    \]
    is the exponential function with respect to the harmonic product.
\end{definition}
\begin{remark}\label{rem:assumption}
Assume there exists a $\bbQ$-algebra homomorphism $Z\colon\fH_{M,\ast}\to\bbC$ such that
\begin{enumerate}[(i)]
    \item\label{ass1} $Z(s_{1,2}^{n})=(-1)^{n}\pi^{2n}/(2n+1)!$ for every $n\ge 0$.
    \item\label{ass2} $Z(s_{1,1})=0$.
    \item\label{ass3} $Z(s_{1,k})=-\zeta(k)$ for every $k\ge 2$.
\end{enumerate}
Extend $Z$ to $\fH_{M,\ast}\jump{x}$ coefficientwise. 
Then we see that $\sin(\pi x)/\pi$ is equal to both $Z(\sS_{1,2}^{\rT}(x))$ and $Z(\sS_{1,2}^{\rR}(x))$ by \eqref{eq:sin_series} and \eqref{eq:sin_reflection}, respectively.
This is why we call $\sS_{z,k}^{\rT}$ and $\sS_{z,k}^{\rR}$ formal sine functions.
As mentioned in Section \ref{sec:introduction}, one such $Z$ is found as the map sending elements of the harmonic algebra to iterated integrals.
\end{remark}
\begin{remark}
    The symbols $\rT$ and $\rR$ stand for the `T'aylor series and `R'eflection formula, respectively.
    For the latter, indeed we can easily show that
    \[\sS_{z^{2},2}^{\rR}(x)=x\sS_{z,1}^{\rR}(x)\ast\sS_{z,1}^{\rR}(-x)\]
    and
    \[Z(\sS_{1,1}^{\rR}(x))=e^{\gamma x}\Gamma(1-x)^{-1},\]
    where $\gamma$ denotes Euler's constant.
\end{remark} 
\subsection{Coincidence of $\sS_{z,k}^{\rT}$ and $\sS_{z,k}^{\rR}$}
The following theorem states that two formal sine functions defined in the previous subsection are actually equal.
\begin{theorem}\label{thm:R_T_coincidence}
    In $\fH_{M,\ast}\jump{x}$, we have $\sS_{z,k}^{\rT}(x)=\sS_{z,k}^{\rR}(x)$ for every $z\in M$ and $k\in\bbZ_{\ge 1}$.
\end{theorem}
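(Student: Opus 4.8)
The plan is to strip off the common factor $x^{k-1}$ and compare the remaining coefficients. Write
\[\sS_{z,k}^{\rT}(x)=x^{k-1}\sum_{n\ge 0}g_{n}x^{nk},\qquad \sS_{z,k}^{\rR}(x)=x^{k-1}\sum_{n\ge 0}h_{n}x^{nk},\]
where $g_{0}=h_{0}=1$, $g_{n}=s_{z^{n},k}s_{z^{n-1},k}\cdots s_{z,k}$ (a concatenation product) for $n\ge 1$, and $\sum_{n\ge 0}h_{n}x^{nk}=\exp_{\ast}(L(x))$ with $L(x)\coloneqq\sum_{m\ge 1}\frac{s_{z^{m},mk}}{m}x^{mk}$. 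It suffices to show $g_{n}=h_{n}$ for all $n$.

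First I would record a recursion for the $h_{n}$. Since $\ast$ is commutative, $\exp_{\ast}$ satisfies the usual chain rule, so $E(x)\coloneqq\exp_{\ast}(L(x))$ obeys $E'(x)=L'(x)\ast E(x)$; reading off the coefficient of $x^{nk-1}$ on both sides (noting $L'(x)=k\sum_{m\ge 1}s_{z^{m},mk}x^{mk-1}$) yields, after dividing by $k$,
\[n\,h_{n}=\sum_{m=1}^{n}s_{z^{m},mk}\ast h_{n-m}\qquad(n\ge 1),\]
which together with $h_{0}=1$ determines the $h_{n}$ uniquely (we work over $\bbQ$). Hence the theorem reduces to proving the single identity
\[n\,g_{n}=\sum_{m=1}^{n}s_{z^{m},mk}\ast g_{n-m}\qquad(n\ge 1)\]
in $\fH_{M,\ast}$; I would prove it by induction on $n$, the case $n=1$ being $s_{z,k}\ast 1=s_{z,k}=g_{1}$.

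The heart of the argument is this last identity, and its engine is one application of \eqref{eq:usual_harmonic}. Writing $g_{j}=s_{z^{j},k}g_{j-1}$ and expanding $s_{z^{m},mk}\ast(s_{z^{j},k}g_{j-1})$ via \eqref{eq:usual_harmonic} (peeling off the first letter-blocks $s_{z^{m},mk}$ and $s_{z^{j},k}$, and using $z^{m}z^{j}=z^{m+j}$ in $M$ together with Lemma \ref{lem:harmonic_e0}) gives
\[s_{z^{m},mk}\ast g_{j}=s_{z^{m+j},mk}\,g_{j}+s_{z^{m+j},k}\bigl(s_{z^{m},mk}\ast g_{j-1}\bigr)-s_{z^{m+j},(m+1)k}\,g_{j-1}.\]
Substituting this (with $j=n-m$, so $m+j=n$) into $\sum_{m=1}^{n}s_{z^{m},mk}\ast g_{n-m}$ and splitting off the $m=n$ term $s_{z^{n},nk}\ast g_{0}=s_{z^{n},nk}$, the sum $\sum_{m=1}^{n-1}\bigl(s_{z^{n},mk}g_{n-m}-s_{z^{n},(m+1)k}g_{n-m-1}\bigr)$ telescopes to $s_{z^{n},k}g_{n-1}-s_{z^{n},nk}$, which cancels the stray $s_{z^{n},nk}$. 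What is left is $s_{z^{n},k}g_{n-1}+s_{z^{n},k}\cdot\sum_{m=1}^{n-1}s_{z^{m},mk}\ast g_{(n-1)-m}$; the inner sum is the right-hand side of the identity at level $n-1$, so by induction it equals $(n-1)g_{n-1}$, and the whole expression becomes $s_{z^{n},k}g_{n-1}+(n-1)s_{z^{n},k}g_{n-1}=n\,s_{z^{n},k}g_{n-1}=n\,g_{n}$, completing the induction.

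I expect the only delicate point to be the bookkeeping in the telescoping step — keeping the exponents $mk$, $(m+1)k$ and the monoid powers $z^{m+j}$ aligned, and correctly recognizing the inner sum as the previous instance of the identity; everything else is formal manipulation. Once the identity for $g_{n}$ is in hand, comparing it with the recursion for $h_{n}$ and the common value $g_{0}=h_{0}=1$ forces $g_{n}=h_{n}$ for every $n$, hence $\sS_{z,k}^{\rT}(x)=\sS_{z,k}^{\rR}(x)$.
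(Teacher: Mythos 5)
Your proposal is correct and follows essentially the same route as the paper: both reduce the statement to the single identity $n\,g_{n}=\sum_{m=1}^{n}s_{z^{m},mk}\ast g_{n-m}$ (the paper via the logarithmic derivative, you via the equivalent ODE $E'=L'\ast E$ plus uniqueness of the recursion) and then prove it by induction on $n$ using one application of \eqref{eq:usual_harmonic} and the same telescoping cancellation. The bookkeeping in your telescoping step checks out, so no changes are needed.
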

\begin{proof}
    The statement $\sS_{z,k}^{\rT}(x)=\sS_{z,k}^{\rR}(x)$ is equivalent to 
    \begin{equation}\label{eq:coincidence_1st}
        \sum_{n=0}^{\infty}s_{z^{n},k}\cdots s_{z,k}x^{nk}=\exp_{\ast}\left(\sum_{n=1}^{\infty}\frac{s_{z^{n},nk}}{n}x^{nk}\right).
    \end{equation}
    Since both sides above have the constant term $1$, it suffices to show $\sS_{z,k}^{\rT}(x)=\sS_{z,k}^{\rR}(x)$ after taking the logarithmic derivative, that is, we shall prove
    \begin{equation}\label{eq:coincidence_2nd}
        \left(\sum_{n=0}^{\infty}nk\cdot s_{z^{n},k}\cdots s_{z,k}x^{nk-1}\right)\ast \left(\sum_{n=0}^{\infty}s_{z^{n},k}\cdots s_{z,k}x^{nk}\right)^{\ast(-1)}=k\sum_{n=1}^{\infty}s_{z^{n},nk}x^{nk-1},
    \end{equation}
    where $\ast(-1)$ means the inverse in $\fH_{M,\ast}\jump{x}$.
    As this identity \eqref{eq:coincidence_2nd} is also equivalent to
    \begin{equation}\label{eq:coincidence_3rd}
        \sum_{n=0}^{\infty}n\cdot s_{z^{n},k}\cdots s_{z,k}x^{nk}=\left(\sum_{n=1}^{\infty}s_{z^{n},nk}x^{nk}\right)\ast\left(\sum_{n=0}^{\infty}s_{z^{n},k}\cdots s_{z,k}x^{nk}\right),
    \end{equation}
    what we prove is the coincidence of their coefficients
    \begin{equation}\label{eq:coincidence_4th}
        n\cdot s_{z^{n},k}\cdots s_{z,k}=\sum_{i=1}^{n}s_{z^{i},ik}\ast s_{z^{n-i},k}\cdots s_{z,k}
    \end{equation}
    for $n\ge 1$.
    Write the right-hand side as $Q(n)$ and let us use induction on $n$.
    The $n=1$ case is clear.
    By \eqref{eq:usual_harmonic}, we have
    \begin{align}
        &Q(n)\\
        &=s_{z^{n},nk}+\sum_{i=1}^{n-1}s_{z^{i},ik}\ast s_{z^{n-i},k}\cdots s_{z,k}\\
        &=\begin{multlined}[t]
            s_{z^{n},nk}+\sum_{i=1}^{n-1}\left(s_{z^{n},ik}(1\ast s_{z^{n-i},k}\cdots s_{z,k})+s_{z^{n},k}(s_{z^{i},ik}\ast s_{z^{n-i-1},k}\cdots s_{z,k})\right.\\
            \left.-s_{z^{n},(i+1)k}(1\ast s_{z^{n-i-1},k}\cdots s_{z,k})\right)
        \end{multlined}\\
        &=\begin{multlined}[t]
            s_{z^{n},nk}+s_{z^{n},k}Q(n-1)\\
            +\sum_{i=1}^{n-1}\left(s_{z^{n},ik}s_{z^{n-i},k}\cdots s_{z,k}-s_{z^{n},(i+1)k}s_{z^{n-i-1},k}\cdots s_{z,k}\right).
        \end{multlined}
    \end{align}
    Since the last finite sum is telescopic, we obtain
    \[Q(n)=s_{z^{n},k}Q(n-1)+s_{z^{n},k}s_{z^{n-1},k}\cdots s_{z,k}.\]
    Then, using the induction hypothesis, we get \eqref{eq:coincidence_4th}.
\end{proof}
\begin{remark}
    Theorem \ref{thm:R_T_coincidence} gives another proof of Ikeda--Sakata's theorem \cite[Theorem 2.1]{is23}.
    Indeed it follows by putting $z=1$ in Theorem \ref{thm:R_T_coincidence} and identifying $(-1)^{r}s_{1,k_{1}}\cdots s_{1,k_{r}}$ in our notation with $[k_{1},\ldots,k_{r}]$ in their notation.
\end{remark}
\section{Addition formula}\label{sec:af}
Because Theorem \ref{thm:R_T_coincidence} holds and we only consider the case where $k=2$ hereafter, we use the notation $\sS_{z}(x)\coloneqq \sS_{z,2}^{\rT}(x)=\sS_{z,2}^{\rR}(x)$.
Recall the homomorphism $Z$ of which we assumed the existence in Remark \ref{rem:assumption} and $Z(\sS_{1,2}(x))=\sin (\pi x)/\pi$.
Mimicking one of definitions of the cosine function
\[\cos(\pi x)\coloneqq \frac{d}{dx}\frac{\sin (\pi x)}{\pi},\]
we define $\sC_{z}(x)\coloneqq\sS'_{z}(x)$.
Moreover, since the addition formula (Theorem \ref{thm:classical_af}) is rewritten as
\[Z(\sS_{1}(x+y))=Z(\sS_{1}(x))Z(\sC_{1}(y))+Z(\sC_{1}(x))Z(\sS_{1}(y)),\]
we consider the power series
\[A_{z}(x,y)\coloneqq \sS_{z}(x+y)-\sS_{z}(x)\ast\sC_{z}(y)-\sC_{z}(x)\ast \sS_{z}(y)\in\fH_{M,\ast}\jump{x,y}.\]

For $z\in M$, put $w_{z,n}\coloneqq (2n+1)!s_{z^{n},2}s_{z^{n-1},2}\cdots s_{z,2}$ and $g_{z,m,n}\coloneqq w_{z,m+n}-w_{z,m}\ast w_{z,n}$ for non-negative integers $m$ and $n$.
Let $\AP_{z}$ be the ideal of $\fH_{M,\ast}$ generated by $\{g_{z,m,n}\mid m,n\ge 0\}$ and refer any ideal $I\subseteq \fH_{M,\ast}$ containing $\AP_{z}$ as an \emph{$\AP_{z}$-ample ideal}.
\begin{theorem}\label{thm:main_af}
    Let $z$ be an element of $M$ and $I$ an ideal of $\fH_{M,\ast}$.
    Then the following are equivalent.
    \begin{enumerate}[\upshape (i)]
        \item\label{af1} $I$ is $\AP_{z}$-ample.
        \item\label{af2} $A_{z}(x,y)=0$ holds in the quotient ring $(\fH_{M,\ast}/I)\jump{x,y}$.
    \end{enumerate}
\end{theorem}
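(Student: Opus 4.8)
The plan is to expand $A_z(x,y)$ as a power series in the central variables $x,y$ with coefficients in $\fH_M$, to identify those coefficients with explicit rational multiples of the generators $g_{z,m,n}$ of $\AP_z$, and then to read the vanishing of $A_z(x,y)$ in $(\fH_{M,\ast}/I)\jump{x,y}$ off coefficient by coefficient, which turns statement (ii) into a membership statement for $I$.

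First I would record the relevant expansions. By the definition of $w_{z,n}$ we have $\sS_z(x)=\sum_{n\ge 0}\frac{w_{z,n}}{(2n+1)!}x^{2n+1}$, and since $\sC_z=\sS'_z$ (formal differentiation in $x$) this gives $\sC_z(x)=\sum_{n\ge 0}\frac{w_{z,n}}{(2n)!}x^{2n}$. As $\ast$ is $\bbQ$-bilinear and acts trivially on the formal variables,
\[\sS_z(x)\ast\sC_z(y)=\sum_{p,q\ge 0}\frac{w_{z,p}\ast w_{z,q}}{(2p+1)!\,(2q)!}x^{2p+1}y^{2q},\qquad \sC_z(x)\ast\sS_z(y)=\sum_{p,q\ge 0}\frac{w_{z,p}\ast w_{z,q}}{(2p)!\,(2q+1)!}x^{2p}y^{2q+1}.\]
On the other hand, the binomial theorem together with the elementary identities $\binom{2n+1}{2p+1}=\frac{(2n+1)!}{(2p+1)!\,(2q)!}$ and $\binom{2n+1}{2p}=\frac{(2n+1)!}{(2p)!\,(2q+1)!}$, where $q=n-p$, yields
\[\sS_z(x+y)=\sum_{p,q\ge 0}\frac{w_{z,p+q}}{(2p+1)!\,(2q)!}x^{2p+1}y^{2q}+\sum_{p,q\ge 0}\frac{w_{z,p+q}}{(2p)!\,(2q+1)!}x^{2p}y^{2q+1}.\]
Subtracting, every monomial occurring in $A_z(x,y)$ has odd total degree in $x,y$, and
\[A_z(x,y)=\sum_{p,q\ge 0}\frac{g_{z,p,q}}{(2p+1)!\,(2q)!}x^{2p+1}y^{2q}+\sum_{p,q\ge 0}\frac{g_{z,p,q}}{(2p)!\,(2q+1)!}x^{2p}y^{2q+1}.\]

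With this formula the equivalence is straightforward. If $I$ is $\AP_z$-ample, then $g_{z,p,q}\in\AP_z\subseteq I$ for all $p,q\ge 0$, so every coefficient of $A_z(x,y)$ lies in $I$ and hence $A_z(x,y)=0$ in $(\fH_{M,\ast}/I)\jump{x,y}$; this is (i)$\Rightarrow$(ii). Conversely, if $A_z(x,y)=0$ in the quotient then each coefficient of $A_z(x,y)$ already lies in $I$; reading off the coefficient of $x^{2p+1}y^{2q}$ and multiplying by the unit $(2p+1)!\,(2q)!\in\bbQ^{\times}\subseteq\fH_{M,\ast}$ gives $g_{z,p,q}\in I$ for all $p,q\ge 0$, whence $\AP_z\subseteq I$ since the $g_{z,p,q}$ generate $\AP_z$ as an ideal. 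This proves (ii)$\Rightarrow$(i).

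The argument is essentially a bookkeeping exercise, and the only point that needs care is checking that the factorial denominator attached to $w_{z,p+q}$ coming from the binomial expansion of $\sS_z(x+y)$ matches exactly the one attached to $w_{z,p}\ast w_{z,q}$ coming from the two harmonic products, so that their difference in each bidegree is precisely a rational multiple of $g_{z,p,q}$; no deeper obstacle is anticipated.
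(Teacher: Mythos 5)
Your proposal is correct and follows essentially the same route as the paper: expand $A_z(x,y)$ coefficientwise, identify the coefficient in each bidegree as an explicit nonzero rational multiple of a generator $g_{z,p,q}$ of $\AP_z$, and read both implications off from that identification. The binomial bookkeeping matches the paper's computation exactly.
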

\begin{proof}
    Let us denote the coefficient of a power series $\Phi(x,y)$ at $x^{m}y^{n}$ by $C(\Phi;x^{m}y^{n})$.
    Since $\sS_{z}(x)$ (resp.~$\sC_{z}(x)$) is an odd (resp.~even) function of $x$, the terms whose homogeneous degree is odd only appear in $A_{z}(x,y)$.
    Thus let us compute $C(A_{z};x^{i}y^{2N+1-i})$ for $0\le i\le N$.  
    From the explicit series expression of $\sS_{z}$ (definition as $\sS_{z,2}^{\rT}$), we obtain
    \begin{align}
        C(\sS_{z}(x+y);x^{i}y^{2N+1-i})
        &=\frac{w_{z,N}}{i!(2N+1-i)!},\\
        C(\sS_{z}(x);x^{i})
        &=\begin{cases}
        w_{z,(i-1)/2}/i! & \text{if }i\text{ is odd},\\
        0 & \text{if }i\text{ is even},
        \end{cases}
    \end{align}
    and
    \begin{align}
    C(\sC_{z}(x);x^{i})
        &=C(\sS'_{z}(x);x^{i})\\
        &=\begin{cases}
        0 & \text{if }i\text{ is odd},\\
        w_{z,i/2}/i! & \text{if }i\text{ is even}.
        \end{cases}
    \end{align}
    Therefore we have
    \begin{equation}\label{eq:af_coefficient}
        C(A_{z};x^{i}y^{2N+1-i})
        =\frac{1}{i!(2N+1-i)!}
        \cdot\begin{cases}g_{z,(i-1)/2,N-(i-1)/2} & \text{if }i\text{ is odd},\\
        g_{z,i/2,N-i/2} & \text{if }i\text{ is even}.
        \end{cases}
    \end{equation}
    Thus \eqref{af1} implies \eqref{af2}.
    Conversely, an arbitrary generator $g_{z,m,n}$ of $\AP_{z}$ has an expression
    \[g_{z,m,n}=(2m+1)!(2n)!\cdot C(A_{z};x^{2m+1}y^{2n})=(2m)!(2n+1)!\cdot C(A_{z};x^{2m}y^{2n+1}).\]
    Hence \eqref{af2} implies \eqref{af1}.
\end{proof}
\begin{remark}
    The first condition \eqref{ass1} in Remark \ref{rem:assumption} states $Z(w_{1,n})=\pi^{2n}$.
    Therefore, the kernel of the map $Z$ is automatically $\AP_{1}$-ample and thus we obtain the addition formula for $Z(\sS_{1}(x))$ by virtue of Theorem \ref{thm:main_af}.
    Furthermore, the Pythagorean theorem for $Z(\sS_{1}(x))$ also holds by Theorem \ref{thm:main_pt} below. 
\end{remark}
\section{Pythagorean theorem}\label{sec:pt}
Recall $Z(\sS_{1}(x))=\sin(\pi x)/\pi$ and $Z(\sC_{1}(x))=\cos(\pi x)$.
In terms of them, we can rephrase the classical Pythagorean theorem (Theorem \ref{thm:classical_pt}) as
\[-Z(w_{1,1})Z(\sS_{1}(x))^{2}+Z(\sC_{1}(x))^{2}=1,\]
where we have used $Z(w_{1,1})=-\pi^{2}$ which follows from the assumptions \eqref{ass1} and \eqref{ass3} in Remark \ref{rem:assumption}.
Imitating this, we define
\[P_{z}(x)\coloneqq -w_{z,1}\ast\sS_{z}(x)^{\ast 2}+\sC_{z}(x)^{\ast 2}\in\fH_{M,\ast}\jump{x},\]
where $\ast n$ denotes the $n$-th power by the harmonic product.
\begin{theorem}\label{thm:main_pt}
    Let $z$ be an element of $M$ and $I\subseteq\fH_{M,\ast}$ an ideal.
    Then the following are equivalent.
    \begin{enumerate}[\upshape (i)]
        \item\label{pt1} $I$ is $\AP_{z}$-ample.
        \item\label{pt2} $P_{z}(x)=1$ holds in the quotient ring $(\fH_{M,\ast}/I)\jump{x}$.
    \end{enumerate}
\end{theorem}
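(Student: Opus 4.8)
The plan is to deduce both implications from a single differential identity, in direct analogy with the classical derivation of $\sin^{2}x+\cos^{2}x=1$ from $(\sin^{2}x+\cos^{2}x)'=0$. First I would record the explicit expansions coming from the definitions of $\sS_{z,2}^{\rT}$ and of $w_{z,n}$: namely $\sS_{z}(x)=\sum_{n\ge 0}w_{z,n}x^{2n+1}/(2n+1)!$, hence $\sC_{z}(x)=\sS_{z}'(x)=\sum_{n\ge 0}w_{z,n}x^{2n}/(2n)!$. Since $d/dx$ is a derivation for $\ast$ on $\fH_{M,\ast}\jump{x}$ and $\ast$ is commutative, a short computation gives
\[
P_{z}'(x)=-2\,w_{z,1}\ast\sS_{z}(x)\ast\sC_{z}(x)+2\,\sC_{z}(x)\ast\sS_{z}''(x)=2\,\sC_{z}(x)\ast E_{z}(x),\qquad E_{z}(x)\coloneqq\sS_{z}''(x)-w_{z,1}\ast\sS_{z}(x).
\]
Re-indexing the series for $\sS_{z}''$ shows $E_{z}(x)=\sum_{n\ge 0}(w_{z,n+1}-w_{z,1}\ast w_{z,n})x^{2n+1}/(2n+1)!=\sum_{n\ge 0}g_{z,1,n}x^{2n+1}/(2n+1)!$, so the vanishing of $E_{z}$ modulo an ideal $I$ is exactly the assertion that $g_{z,1,n}\in I$ for all $n$.

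The second ingredient is that $\AP_{z}$ is already generated by the subfamily $\{g_{z,1,n}\mid n\ge 1\}$. I would prove this by working modulo $J\coloneqq\langle g_{z,1,n}\mid n\ge 1\rangle$: there $w_{z,n+1}\equiv w_{z,1}\ast w_{z,n}$ for $n\ge 1$ and $w_{z,0}=1$, so $w_{z,n}\equiv w_{z,1}^{\ast n}$ for all $n\ge 0$ by induction; consequently $w_{z,m}\ast w_{z,n}\equiv w_{z,1}^{\ast(m+n)}\equiv w_{z,m+n}$, and every generator $g_{z,m,n}$ of $\AP_{z}$ lies in $J$.

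Granting these, the equivalence is formal. For \eqref{pt1}$\Rightarrow$\eqref{pt2}: if $\AP_{z}\subseteq I$ then all $g_{z,1,n}\in I$, hence $E_{z}(x)=0$ in $(\fH_{M,\ast}/I)\jump{x}$, hence $P_{z}'(x)=0$ there; since $\fH_{M,\ast}/I$ is a $\bbQ$-algebra this forces $P_{z}(x)=P_{z}(0)$, and $P_{z}(0)=-w_{z,1}\ast\sS_{z}(0)^{\ast 2}+\sC_{z}(0)^{\ast 2}=1$ because $\sS_{z}(0)=0$ and $\sC_{z}(0)=w_{z,0}=1$. For \eqref{pt2}$\Rightarrow$\eqref{pt1}: if $P_{z}(x)=1$ in $(\fH_{M,\ast}/I)\jump{x}$ then $P_{z}'(x)=0$, so $\sC_{z}(x)\ast E_{z}(x)=0$; since $\sC_{z}(x)$ has constant term $1$ it is a unit in $(\fH_{M,\ast}/I)\jump{x}$, so $E_{z}(x)=0$, i.e.\ $g_{z,1,n}\in I$ for all $n$, and therefore $\AP_{z}\subseteq I$ by the generation statement.

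The step I expect to need the most care is \eqref{pt2}$\Rightarrow$\eqref{pt1}, where the single power-series identity $P_{z}(x)=1$ must be made to force membership in an ideal nominally generated by all the $g_{z,m,n}$; the reduction $\AP_{z}=\langle g_{z,1,n}\rangle$ together with invertibility of $\sC_{z}$ is precisely what makes this go through cleanly. An alternative route, closer in spirit to the proof of Theorem~\ref{thm:main_af}, is to compute $(2N)!\,C(P_{z};x^{2N})=\sum_{i+j=N}\binom{2N}{2i}w_{z,i}\ast w_{z,j}-\sum_{i+j=N-1}\binom{2N}{2i+1}w_{z,1}\ast w_{z,i}\ast w_{z,j}$, which vanishes modulo $\AP_{z}$ by $\sum_{k}(-1)^{k}\binom{2N}{k}=0$ and equals $2\,g_{z,1,N-1}$ plus an explicit rational combination of the $w_{z,1}\ast g_{z,1,m}$ with $m<N-1$, giving both directions by induction on $N$; but untangling that recursion is more laborious than the derivation argument above.
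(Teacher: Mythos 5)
Your argument is correct, and it takes a genuinely different route from the paper. The paper proves both implications by brute-force coefficient extraction: it computes $C(P_{z};x^{2N})$ explicitly, rewrites it modulo $\AP_{z}$ using binomial identities to get (i)$\Rightarrow$(ii), and then runs an induction on $N$ showing $(2n+2)!\,C(P_{z};x^{2n+2})\equiv 2w_{z,n+1}-2w_{z,1}^{\ast(n+1)}$ under the induction hypothesis to get (ii)$\Rightarrow$(i). Your derivation argument replaces all of this with the single identity $P_{z}'(x)=2\,\sC_{z}(x)\ast E_{z}(x)$, where $E_{z}(x)=\sum_{n\ge 0}g_{z,1,n}x^{2n+1}/(2n+1)!$; each ingredient you use checks out: $d/dx$ is a derivation for the coefficientwise-extended $\ast$, a power series over a $\bbQ$-algebra with vanishing derivative is constant, $P_{z}(0)=1$ since $\sS_{z}(0)=0$ and $\sC_{z}(0)=w_{z,0}=1$, and $\sC_{z}(x)$ is a unit in $(\fH_{M,\ast}/I)\jump{x}$ because its constant term is $1$. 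Your reduction $\AP_{z}=\langle g_{z,1,n}\mid n\ge 1\rangle$ is the same lemma the paper invokes in the form ``$\AP_{z}$ is generated by $w_{z,n}-w_{z,1}^{\ast n}$'' (the paper states it without proof; you supply the easy induction, which is a small improvement). What your approach buys is a visibly classical proof -- it is literally the $(\sin^{2}+\cos^{2})'=0$ argument transplanted into $\fH_{M,\ast}\jump{x}$ -- and it makes the reverse implication transparent via invertibility of $\sC_{z}$ rather than via an induction entangled with binomial coefficients. What the paper's computation buys is the explicit formula for $C(P_{z};x^{2N})$ as a $\bbQ$-linear combination of generators of $\AP_{z}$, which is what lets one read off concrete multiple zeta value relations such as $4\zeta(2,2)=3\zeta(4)$ in Section \ref{sec:ii}; your proof establishes the equivalence without producing that explicit combination.
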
  
\begin{proof}
    Obviously $P_{z}(x)$ is an even function of $z$.
    Similarly to the proof of Theorem \ref{thm:main_af}, computing the coefficients by the definition of $\sS_{z,k}^{\rT}$, we have
    \begin{align}
        C(w_{z,1}\ast\sS_{z}(x)^{\ast 2};x^{2N})
        &=\sum_{i=0}^{N}\frac{w_{z,1}\ast w_{z,i}\ast w_{z,N-i-1}}{(2i+1)!(2N-2i-1)!},\\
        C(\sC_{z}(x)^{\ast 2};x^{2N})
        &=\sum_{i=0}^{N}\frac{w_{z,i}\ast w_{z,N-i}}{(2i)!(2N-2i)!}
    \end{align}
    for $N\ge 0$.
    We understand the term where $i=N$ of the first sum as $0$.
    Hence we have 
    \begin{align}
        &C(P_{z};x^{2N})\\
        &=\sum_{i=0}^{N}\left(\frac{w_{z,i}\ast w_{z,N-i}}{(2i)!(2N-2i)!}-\frac{w_{z,1}\ast w_{z,i}\ast w_{z,N-i-1}}{(2i+1)!(2N-2i-1)!}\right)\\
        &=\begin{multlined}[t]
            \frac{1}{(2N)!}\sum_{i=0}^{N}\left(\binom{2N}{2i}(w_{z,N}-g_{z,i,N-i})\right.\\
            \left.-\binom{2N}{2i+1}(w_{z,N}-g_{z,1,N-1}-w_{z,1}\ast g_{z,i,N-i-1})\right)
        \end{multlined}\\
        &=\begin{multlined}[t]
            \frac{w_{z,N}}{(2N)!}\sum_{i=0}^{N}\left(\binom{2N}{2i}-\binom{2N}{2i+1}\right)\\
            +\frac{1}{(2N)!}\sum_{i=0}^{N}\left(-\binom{2N}{2i}g_{z,i,N-i}-\binom{2N}{2i+1}(-g_{z,1,N-1}-w_{z,1}\ast g_{z,i,N-i-1})\right)
        \end{multlined}\\
        &=\begin{multlined}[t]
            \frac{w_{z,N}}{(2N)!}\delta_{N,0}\\+\frac{1}{(2N)!}\sum_{i=0}^{N}\left(-\binom{2N}{2i}g_{z,i,N-i}-\binom{2N}{2i+1}(-g_{z,1,N-1}-w_{z,1}\ast g_{z,i,N-i-1})\right).
        \end{multlined}
    \end{align}
    As the last sum is equal to $0$ modulo $\AP_{z}$, the statement \eqref{pt1} derives \eqref{pt2}.\\

    Let us prove the rest implication \eqref{pt2} $\implies$ \eqref{pt1}.
    Since $\AP_{z}$ is generated by $w_{z,n}-w_{z,1}^{\ast n}$ with $n\ge 1$, by induction on $n$, we prove $w_{z,n}=w_{z,1}^{\ast n}$ in a quotient ring $\fH_{M,\ast}/I$ where the ideal $I$ contains each $C(P_{z};x^{2N})$. 
    The $n=1$ case is clear.
    Assume that there exists $n\ge 1$ such that $w_{z,i}-w_{z,1}^{\ast i}$ for every $1\le i\le n$ in $\fH_{M,\ast}/I$.
    By the previous computation, we obtain 
    \begin{align}
        &(2n+2)!C(P_{z};x^{2n+2})\\
        &=\sum_{i=0}^{n+1}\left(\binom{2n+2}{2i}w_{z,i}\ast w_{z,n+1-i}-\binom{2n+2}{2i+1}w_{z,1}\ast w_{z,i}\ast w_{z,n-i}\right)\\
        &=\begin{multlined}[t]
            2w_{z,n+1}-(2n+2)w_{z,1}\ast w_{z,n}\\
            +\sum_{i=1}^{n}\left(\binom{2n+2}{2i}w_{z,i}\ast w_{z,n+1-i}-\binom{2n+2}{2i+1}w_{z,1}\ast w_{z,i}\ast w_{z,n-i}\right).
        \end{multlined}
    \end{align}
    Using the induction hypothesis, we have $w_{z,1}\ast w_{z,i}\ast w_{z,n-i}=w_{z,i}\ast w_{z,n+1-i}=w_{z,1}^{n+1}$ for $1\le i\le n$ in $\fH_{M,\ast}/I$.
    Therefore it follows that
    \begin{align}
        &(2n+2)!C(P_{z};x^{2n+2})\\
        &=2w_{z,n+1}-(2n+2)w_{z,1}^{\ast (n+1)}+\sum_{i=1}^{n}\left(\binom{2n+2}{2i}-\binom{2n+2}{2i+1}\right)w_{z,1}^{\ast (n+1)}\\
        &=2w_{z,n+1}-2w_{z,1}^{\ast(n+1)}.
    \end{align}
    Since the right-hand side is $0$ in $\fH_{M,\ast}/I$, we see that \eqref{pt1} follows from \eqref{pt2}.
\end{proof}
Combining Theorems \ref{thm:main_af} and \ref{thm:main_pt}, we obtain Theorem \ref{thm:main}.
\section{Recovering classical formulas}\label{sec:ii}
In this section, we construct the homomorphism $Z$ mentioned in Remark \ref{rem:assumption} from iterated integrals.
Consider $M\coloneqq\{z\in\bbC\mid |z|\ge 1\}\cup\{0\}$ and endow it with the usual product induced from $\bbC$.
We define a subspace $\fH_{M}^{0}$ of $\fH_{M}$ by
\[\fH_{M}^{0}\coloneqq\bbQ\oplus\bigoplus_{a\in M\setminus\{0,1\}}\fH_{M}e_{a}\oplus\bigoplus_{\substack{a\in M\setminus\{0\}\\ b\in M\setminus\{1\}}}e_{a}\fH_{M}e_{b}.\]
This subspace is closed about the harmonic product.
Denote by $\fH_{M,\ast}^{0}$ it when we consider it as a $\bbQ$-subalgebra of $\fH_{M,\ast}$.
For a word (monic monomial) $w=e_{a_{1}}\cdots e_{a_{k}}$ in $\fH_{M}^{0}$, we put $\wt(w)\coloneqq k$. 
\begin{definition}
    Let $s,u$ be real numbers satisfying $0\le s<u\le 1$.
    For a word $w=e_{z_{1}}\cdots e_{z_{k}}$ of $\fH_{M}^{0}$, we put
    \[I_{s,u}(w)\coloneqq\int_{s<t_{1}<\cdots<t_{k}<u}\prod_{i=1}^{k}\frac{dt_{i}}{t_{i}-z_{i}}.\]
    When $k=0$, the right-hand side is regarded as $1$.
    Extending it $\bbQ$-linearly, we define $I_{s,u}\colon\fH_{M}^{0}\to\bbC$.
\end{definition}
\begin{theorem}\label{thm:harmonic}
    The map $I_{0,1}\colon\fH_{M,\ast}^{0}\to\bbC$ is a $\bbQ$-algebra homomorphism.
\end{theorem}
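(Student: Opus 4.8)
The plan is to reduce the claimed multiplicativity $I_{0,1}(w\ast w')=I_{0,1}(w)I_{0,1}(w')$ to the classical ``shuffle-from-path-composition'' identity for iterated integrals, combined with a standard decomposition of the harmonic product into a shuffle-type product with correction terms. First I would recall that for a fixed interval, the family of iterated integrals $I_{s,u}$ satisfies the path-composition (Chen) formula
\[
I_{s,u}(e_{z_1}\cdots e_{z_k})=\sum_{j=0}^{k} I_{s,t}(e_{z_1}\cdots e_{z_j})\,I_{t,u}(e_{z_{j+1}}\cdots e_{z_k})\qquad(s<t<u),
\]
and, more importantly, that the product $I_{s,u}(w)I_{s,u}(w')$ of two iterated integrals over the \emph{same} interval equals $I_{s,u}(w\,\shuffle\,w')$, where $\shuffle$ is the shuffle product on words. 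This is the usual Fubini/simplex-decomposition argument: the product is an integral over $\{s<t_1<\cdots<t_k<u\}\times\{s<t_1'<\cdots<t_l'<u\}$, and decomposing according to the relative order of the $t$'s and $t'$'s (ignoring the measure-zero diagonals) yields exactly the shuffle sum.

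The second ingredient is algebraic: the harmonic product $\ast$ on $\fH_{M,\ast}^0$ and the shuffle product $\shuffle$ agree ``up to lower-weight corrections supported on the $e_0$ entries.'' Concretely, using the notation $s_{z,k}=e_z e_0^{k-1}$ and the rewriting \eqref{eq:usual_harmonic}, one sees that $s_{a,k}w\ast s_{b,l}w'$ differs from the shuffle of the same words only in that shuffling $e_a e_0^{k-1}$ past $e_b e_0^{l-1}$ produces, besides the ``leading'' terms $s_{ab,k}(w\ast\cdots)+s_{ab,l}(\cdots\ast w')$, additional strings in which some $e_0$'s have been merged; these merged contributions are accounted for by $-s_{ab,k+l}(w\ast w')$. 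Since $I_{s,u}(e_z e_0^{k-1}\cdots)$ only depends on the block structure $(z,k)$ — indeed $\tfrac{dt}{t-0}=\tfrac{dt}{t}$ — one checks directly that $I_{0,1}$ kills precisely the difference $w\ast w'-w\,\shuffle\,w'$ on $\fH_{M,\ast}^0$, i.e. $I_{0,1}(w\ast w')=I_{0,1}(w\,\shuffle\,w')$. (This is where the definition of $\fH_M^0$ matters: convergence of all the integrals involved is guaranteed by the conditions $a\neq 1$ at the far end and $b\neq \cdot$ structure at the near end, and $|a|\ge 1$ keeps the integrand nonsingular on $(0,1)$.)

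Chaining the two ingredients gives
\[
I_{0,1}(w\ast w')=I_{0,1}(w\,\shuffle\,w')=I_{0,1}(w)\,I_{0,1}(w'),
\]
and since $I_{0,1}(1)=1$ and $I_{0,1}$ is $\bbQ$-linear by construction, it is a $\bbQ$-algebra homomorphism. The main obstacle is the bookkeeping in the second step: one must verify \emph{rigorously}, by induction on $\wt(w)+\wt(w')$ using \eqref{eq:usual_harmonic} and Lemma \ref{lem:harmonic_e0}, that $I_{0,1}$ annihilates $w\ast w'-w\,\shuffle\,w'$; the delicate point is matching the telescoping $-s_{ab,k+l}$ correction terms in the harmonic recursion against the ``collision'' terms one gets when two $e_z e_0^{k-1}$ blocks interleave in the shuffle, which requires care about when $z=0$ (handled cleanly by Lemma \ref{lem:harmonic_e0}) versus $z\neq 0$. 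The rest — Chen's formula and the simplex decomposition for the shuffle — is standard and I would cite it rather than reprove it.
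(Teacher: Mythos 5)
There is a genuine gap at the heart of your argument. Your first ingredient (the simplex decomposition giving $I_{0,1}(w)I_{0,1}(w')=I_{0,1}(w\mathbin{\sqcup\!\sqcup}w')$ for the shuffle product) is standard and fine, but your second ingredient --- that one ``checks directly'' that $I_{0,1}$ annihilates $w\ast w'-w\mathbin{\sqcup\!\sqcup}w'$ --- is not a bookkeeping exercise: given the shuffle relation, that statement is \emph{equivalent} to the theorem, so as written the proposal is circular. More concretely, the sketched justification cannot work because the harmonic product here is not the classical stuffle-with-merged-$e_0$'s you describe. By \eqref{eq:definition_harmonic}, $e_aw\ast e_bw'$ begins with the letter $e_{ab}$, indexed by the \emph{monoid product} $ab\in M$, and likewise \eqref{eq:usual_harmonic} produces $s_{ab,k}$, $s_{ab,l}$, $s_{ab,k+l}$. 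The shuffle product only permutes the original letters $e_{z_i}$ and never creates the new letter $e_{ab}$, so the difference $w\ast w'-w\mathbin{\sqcup\!\sqcup}w'$ is not ``supported on collisions of $e_0$'s''; it involves words in an entirely different alphabet, and no telescoping of correction terms can match them. Your heuristic that $I_{s,u}$ ``only depends on the block structure'' does not explain how an integrand $dv/(v-ab)$ could arise from shuffling integrands $ds/(s-a)$ and $dt/(t-b)$.

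The missing analytic input is a multiplicative change of variables. The paper proves the statement by introducing the mixed integral $L_{u}(w;w')$ in which the inner iterated integral runs over a rescaled interval $[u/t_{1},1]$, and showing $L_{u}(w;w')=I_{u,1}(w\ast w')$ by induction on $\wt(w)+\wt(w')$; the induction step substitutes $v=st_{0}$ and uses the partial fraction identity
\begin{equation}
\frac{1}{s-a}\,\frac{1}{t_{0}-b}=\frac{1}{st_{0}-ab}\left(\frac{t_{0}}{t_{0}-b}+\frac{a}{s-a}\right),
\end{equation}
which is precisely where the form $dv/(v-ab)$, and hence the letter $e_{ab}$ with the three terms of \eqref{eq:definition_harmonic}, comes from; the theorem then follows by letting $u\to 0^{+}$. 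If you want to salvage your plan, you would have to replace the claimed shuffle comparison by an argument of this kind (or by a genuinely different realization of the product structure); as it stands, the central step is asserted rather than proved, and the proposed route to proving it would fail.
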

    \begin{proof}
    For $0<u<1$ and words $w$ and $w'=e_{b_{1}}\cdots e_{b_{k}}$ of $\fH_{M}^{0}$, we put
    \[L_{u}(w;w')\coloneqq\int_{u<t_{1}<\cdots<t_{k}<1}I_{u/t_{1},1}(w)\prod_{j=1}^{k}\frac{dt_{j}}{t_{j}-b_{j}}.\]
    Since $L_{u}(1;w')=I_{u,1}(w')$ holds for a word $w'$, we understand $L_{u}(w;w')$ as $1$ when $w=w'=1$. 
    We shall prove $L_{u}(w;w')=I_{u,1}(w\ast w')$ for non-empty words $w$ and $w'$ of $\fH_{M}^{0}$.
    Then the desired statement follows by the $\bbQ$-linearity, the limit $u\to +0$ and the definition of the harmonic product.
    We execute the proof by induction on $\ell=\wt(w)+\wt(w')$.
    When $\ell=0$ it is clear that $L_{u}(w;w')=I_{u,1}(w\ast w')$ holds.
    Assume there exists an integer $\ell\ge 1$ such that $L_{u}(A;B)=I_{u,1}(A\ast B)$ holds for any words $A$ and $B$ with $\wt(A)+\wt(B)=\ell$. 
    Take $a,b\in M$ and a word $w$ and $w'=e_{b_{1}}\cdots e_{b_{k}}$ such that $\wt(w)+k=\ell-1$.
    Then we have
    \begin{align}
    L_{u}(e_{a}w;e_{b}w')
    &=\int_{u<t_{0}<\cdots<t_{k}<1}I_{u/t_{0},1}(e_{a}w)\frac{dt_{0}}{t_{0}-b}\prod_{j=1}^{k}\frac{dt_{j}}{t_{j}-b_{j}}\\
    &=\int_{\substack{u<t_{0}<\cdots<t_{k}<1\\ u/t_{0}<s<1}}I_{s,1}(w)\frac{ds}{s-a}\frac{dt_{0}}{t_{0}-b}\prod_{j=1}^{k}\frac{dt_{j}}{t_{j}-b_{j}}.
    \end{align}
    Applying the partial fraction decomposition
    \[\frac{1}{s-a}\frac{1}{t_{0}-b}=\frac{1}{st_{0}-ab}\left(\frac{t_{0}}{t_{0}-b}+\frac{a}{s-a}\right),\]
    we obtain
    \begin{multline}
    L_{u}(e_{a}w;e_{b}w')
    =\int_{\substack{u<t_{0}<\cdots<t_{k}<1\\ u/t_{0}<s<1}}I_{s,1}(w)\frac{ds}{st_{0}-ab}\frac{t_{0}\,dt_{0}}{t_{0}-b}\prod_{j=1}^{k}\frac{dt_{j}}{t_{j}-b_{j}}\\
    +\int_{\substack{u<t_{0}<\cdots<t_{k}<1\\ u/t_{0}<s<1}}I_{s,1}(w)\frac{1}{st_{0}-ab}\frac{a\,dsdt_{0}}{s-a}\prod_{j=1}^{k}\frac{dt_{j}}{t_{j}-b_{j}}.
    \end{multline}
    By the substitution $st_{0}=v$ in the first term of the right-hand side, it follows that
    \begin{align}
    &\int_{\substack{u<t_{0}<\cdots<t_{k}<1\\ u/t_{0}<s<1}}I_{s,1}(w)\frac{ds}{st_{0}-ab}\frac{t_{0}\,dt_{0}}{t_{0}-b}\prod_{j=1}^{k}\frac{dt_{j}}{t_{j}-b_{j}}\\
    &=\int_{u<v<t_{0}<\cdots<t_{k}<1}I_{v/t_{0},1}(w)\frac{dv}{v-ab}\frac{dt_{0}}{t_{0}-b}\prod_{j=1}^{k}\frac{dt_{j}}{t_{j}-b_{j}}\\
    &=\int_{u<v<1}L_{v}(w;e_{b}w')\frac{dv}{v-ab}.
    \end{align} 
    This is equal to $I_{u,1}(e_{ab}(w\ast e_{b}w'))$ by the induction hypothesis.
    Thus it suffices to show
    \begin{multline}
        I_{u,1}\left(e_{ab}(e_{a}w\ast w'-e_{0}(w\ast w'))\right)\\
        =\int_{\substack{u<t_{0}<\cdots <t_{k}<1\\ u/t_{0}<s<1}}I_{s,1}(w)\frac{1}{st_{0}-ab}\frac{a\,ds}{s-a}\prod_{j=1}^{k}\frac{dt_{j}}{t_{j}-b_{j}}\,dt_{0}.
\end{multline}
    Note that $e_{ab}(e_{a}w\ast w'-e_{0}(w\ast w'))=e_{ab}((e_{a}-e_{0})w\ast w')$ holds by virtue of Lemma \ref{lem:harmonic_e0}.
    Using the induction hypothesis again, the left-hand side is computed as
    \begin{align}
    &I_{u,1}\left(e_{ab}((e_{a}-e_{0})w\ast w')\right)\\
    &=\int_{u<v<1}L_{v}((e_{a}-e_{0})w;w')\frac{dv}{v-ab}\\
    &=\int_{u<v<t_{1}<\cdots<t_{k}<1}\frac{1}{v-ab}I_{v/t_{1},1}((e_{a}-e_{0})w)\,dv\prod_{j=1}^{k}\frac{dt_{j}}{t_{j}-b_{j}}\\
    &=\int_{\substack{u<v<t_{1}<\cdots<t_{k}<1\\ v/t_{1}<s<1}}\frac{dv}{v-ab}\left(\frac{1}{s-a}-\frac{1}{s}\right)I_{s,1}(w)\,ds\prod_{j=1}^{k}\frac{dt_{j}}{t_{j}-b_{j}}.
    \end{align}
    Therefore we should show
    \begin{multline}\label{eq:star1}
        \int_{\substack{u<v<t_{1}<\cdots<t_{k}<1\\ v/t_{1}<s<1}}\frac{dv}{v-ab}\left(\frac{1}{s-a}-\frac{1}{s}\right)I_{s,1}(w)\,ds\prod_{j=1}^{k}\frac{dt_{j}}{t_{j}-b_{j}}\\
        =\int_{\substack{u<t_{0}<\cdots <t_{k}<1\\ u/t_{0}<s<1}}I_{s,1}(w)\frac{1}{st_{0}-ab}\frac{a\,ds}{s-a}\prod_{j=1}^{k}\frac{dt_{j}}{t_{j}-b_{j}}\,dt_{0}.
    \end{multline}
    On the other hand, we see that the identity
    \begin{multline}\label{eq:star2}
        \int_{\substack{u<v<t_{1}\\ v/t_{1}<s<1}}\frac{dv}{v-ab}\left(\frac{1}{s-a}-\frac{1}{s}\right)I_{s,1}(w)\,ds\\
        =\int_{\substack{u<t_{0}<t_{1}\\ u/t_{0}<s<1}}I_{s,1}(w)\frac{1}{st_{0}-ab}\frac{a\,ds}{s-a}\,dt_{0}
    \end{multline}
    holds for $t_{1}\in(0,u)$ by the substitution $v=st_{0}$ in the left-hand side.
    Since \eqref{eq:star2} holds, we have \eqref{eq:star1}, which is the desired formula.
    \end{proof}
    Next, we define the \emph{harmonic regularization} of iterated integrals.
    Define $\fH_{M}^{1}\coloneqq\bigoplus_{m=0}^{\infty}\fH_{M}^{0}e_{1}^{m}$ and denote by $\fH_{M,\ast}^{1}$ it equipped with $\ast$.
    For a word $w=e_{a_{1}}\cdots e_{a_{k}}$ in $\fH_{M}^{i}$ ($i\in\{0,1\}$), put $l(w)\coloneqq |\{i\mid a_{i}\neq 0\}|$.
    We define $\ell(w)\coloneqq 0$ for $k=1$.
    Furthermore, for a non-negative integer $d$, let $\cF_{d}\fH_{M}^{i}$ be the subspace of $\fH_{M}^{i}$ defined by
    \[\cF_{d}\fH_{M}^{i}\coloneqq\mathrm{span}_{\bbQ}\{w\in\fH_{M}^{i}\text{: word}\mid \ell(w)\le d\}\]
    \begin{lemma}\label{lem:aux1}
        Let $d$ be a positive integer and $w$ a word in $\cF_{d}\fH_{M}^{1}$.
        Then there exist $v_{1}\in\cF_{d}\fH_{M}^{0}$ and $v_{2},v_{3}\in\cF_{d-1}\fH_{M}^{0}$ such that $w=v_{1}+v_{2}\ast e_{1}+v_{3}$.
    \end{lemma}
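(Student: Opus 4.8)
The plan is to argue by induction on the number $m$ of copies of $e_1$ occurring consecutively at the right end of the word $w$; equivalently, write $w=u\,e_1^{\,m}$ with $u$ a word of $\fH_M^0$ (possibly $u=1$) whose last letter is not $e_1$. Whenever $m\ge 1$ one has $\ell(u)\le d-1$ (using $d\ge 1$ and $\ell(w)\le d$), and, for $m\ge 2$, also $\ell(u\,e_1^{\,m-1})\le\ell(w)-1\le d-1$; these bounds are what will feed the recursion.

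The base cases are quick. If $m=0$ then $w=u\in\cF_d\fH_M^0$, so $v_1=w$, $v_2=v_3=0$ works. If $m=1$, expand $u\ast e_1$ by iterating \eqref{eq:usual_harmonic} (together with Lemma~\ref{lem:harmonic_e0}): this gives $u\ast e_1=u\,e_1+R$, where each word occurring in $R$ is obtained from $u$ either by inserting one extra copy of one of its nonzero letters or by lengthening one of its blocks with an $e_0$. Such words still end with a letter $\neq e_1$, hence lie in $\fH_M^0$, and satisfy $\ell\le\ell(w)=d$; so $R\in\cF_d\fH_M^0$, and $v_1=-R$, $v_2=u$, $v_3=0$ does the job.

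For the inductive step $m\ge 2$, put $w'=u\,e_1^{\,m-1}$, which has $m-1$ trailing copies of $e_1$ and $\ell(w')\le d-1$. The core computation is to expand $w'\ast e_1$ via \eqref{eq:usual_harmonic}, read as the quasi-shuffle of the single block $e_1=s_{1,1}$ into the block decomposition of $w'$. One checks that the word $w=u\,e_1^{\,m}$ is produced exactly $m$ times — once for each of the $m$ slots inside the trailing run of $e_1$'s — and from no merging term, whereas every other word that appears either lies in $\fH_M^0$ with $\ell\le d$, or has strictly fewer than $m$ trailing copies of $e_1$ with $\ell\le d$. Calling the remainder $S$, we get $w=\tfrac1m(w'\ast e_1)-\tfrac1m S$. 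Here $\tfrac1m(w'\ast e_1)$ has the shape $v_2\ast e_1$ with $v_2=\tfrac1m w'$, and to $-\tfrac1m S$ we apply the induction hypothesis word by word (each word having strictly smaller trailing-$e_1$ count), collecting the $\fH_M^0$-contributions into $v_1$ and the rest into $v_3$. Tracking the depths of the assembled pieces then closes the induction.

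The main obstacle is exactly this bookkeeping inside the inductive step: one must prove that the multiplicity of $w$ in $w'\ast e_1$ equals $m$ and, more delicately, control simultaneously the trailing-$e_1$ count and the depth $\ell$ of every correction word produced by \eqref{eq:usual_harmonic}. The awkward corrections are those obtained by inserting a nonzero letter into the admissible part $u$ of $w'$: they have depth exactly $d$ yet still carry $m-1$ trailing copies of $e_1$, so they fit directly neither into $\cF_d\fH_M^0$ nor into $\cF_{d-1}$. This is precisely why the induction should be organized by the number of trailing $e_1$'s — which these corrections strictly decrease — rather than by the depth; it also shows that the auxiliary factor $v_2=\tfrac1m w'$ and the remainder $v_3$ are naturally produced inside the depth filtration of $\fH_M^1$, so that the final, delicate point is to descend them into $\cF_{d-1}\fH_M^0$ using the same recursion.
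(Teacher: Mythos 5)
Your overall strategy is exactly the paper's: induct on the number $m$ of trailing copies of $e_{1}$, expand $(u e_{1}^{m-1})\ast e_{1}$ by \eqref{eq:usual_harmonic}, observe that $w=ue_{1}^{m}$ occurs with multiplicity exactly $m$ among the shuffle-type terms and from no merging term, and feed the remaining words (all with strictly smaller trailing-$e_{1}$ count and controlled $\ell$) back into the induction. That core computation, including the depth bounds $\ell\le d$ for insertion terms and $\ell\le d-1$ for merging terms, is correct and matches the paper's proof. One small bookkeeping slip: the pieces $v_{2}'\ast e_{1}$ returned by the inductive hypothesis on the words of $S$ must be absorbed into the $v_{2}\ast e_{1}$ slot (as the paper does, setting $v_{2}=(w'e_{1}^{m-1}-v_{2}')/m$), not "collected into $v_{3}$": an element of the form $v_{2}'\ast e_{1}$ need not lie in $\cF_{d-1}$ of anything.

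The genuine gap is the step you yourself flag and defer: descending $v_{2}=\tfrac{1}{m}ue_{1}^{m-1}$ and $v_{3}$ from $\fH_{M}^{1}$ into $\cF_{d-1}\fH_{M}^{0}$. This cannot be done, because the statement with $v_{2},v_{3}\in\cF_{d-1}\fH_{M}^{0}$ is actually false. Take $w=e_{1}e_{1}\in\cF_{2}\fH_{M}^{1}$. Every word of $\fH_{M}^{0}$ other than $1$ ends in a letter different from $e_{1}$, so for $v_{2}\in\fH_{M}^{0}$ every word occurring in $v_{2}\ast e_{1}$ has at most one trailing $e_{1}$; hence $v_{1}+v_{2}\ast e_{1}+v_{3}$ has zero component in the direct summand $\fH_{M}^{0}e_{1}^{2}$ of $\fH_{M}^{1}$, while $e_{1}e_{1}$ does not. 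Attempting the descent "by the same recursion" also fails structurally: applying the lemma again to $v_{2}$ produces terms of the shape $v\ast e_{1}\ast e_{1}$, which no longer fit the template $v_{1}+v_{2}\ast e_{1}+v_{3}$. The resolution is that the conclusion should read $v_{2},v_{3}\in\cF_{d-1}\fH_{M}^{1}$ (note that the paper's own proof invokes the induction hypothesis with $v_{2}',v_{3}'\in\cF_{d-1}\fH_{M}^{1}$ and its final $v_{2},v_{3}$ likewise lie only in $\fH_{M}^{1}$); this weaker statement is precisely what your argument proves, and it is all that the surjectivity argument in Lemma \ref{lem:aux2} requires, since there one inducts separately on $d$ to rewrite elements of $\cF_{d-1}\fH_{M}^{1}$ as polynomials in $e_{1}$. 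So rather than searching for the missing descent, you should correct the target of the induction.
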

    \begin{proof}
        Fix $d$ and put $\cG_{m}\fH_{M}^{1}\coloneqq\bigoplus_{i=0}^{m}\fH_{M}^{0}e_{1}^{i}$ for a non-negative integer $m$ (regard it as $\{0\}$ when $m=-1$).
        For a word $w$ in $\fH_{M}^{1}$, define $\nu(w)$ as the unique non-negative integer $m$ such that $w$ represents an element of the quotient space $\cG_{m}\fH_{M}^{1}/\cG_{m-1}\fH_{M}^{1}$.
        We prove the lemma by induction on $\nu(w)$.
        As the $\nu(w)=0$ case is obvious by putting $v_{1}=w$ and $v_{2}=v_{3}=0$, we assume that there exists $m\ge 1$ such that the assertion holds for every $v$ with $\nu(v)=m-1$.
        Let $w$ be a word with $\nu(w)=m$ and write $w=w'e_{1}^{m}$.
        By \eqref{eq:usual_harmonic}, there exists $v_{0}\in\cF_{d-1}\fH_{M}^{1}$ such that
        \begin{equation}\label{eq:aux1}
            w'e_{1}^{m-1}\ast e_{1}=mw+\sum_{i=1}^{k}e_{a_{1}}\cdots e_{a_{i-1}}e_{1}e_{a_{i}}\cdots e_{a_{k}}e_{1}^{m-1}+v_{0},
    \end{equation}
        where $w'=e_{a_{1}}\cdots e_{a_{k}}$.
        By the induction hypothesis, there exist $v'_{1}\in\cF_{d}\fH_{M}^{0}$ and $v'_{2},v'_{3}\in\cF_{d-1}\fH_{M}^{1}$ such that
        \[\sum_{i=1}^{k}e_{a_{1}}\cdots e_{a_{i-1}}e_{1}e_{a_{i}}\cdots e_{a_{k}}e_{1}^{m-1}=v'_{1}+v'_{2}\ast e_{1}+v'_{3}\]
        holds.
        Applying this to \eqref{eq:aux1}, we obtain
        \[w=\frac{1}{m}\left(w'e_{1}^{m-1}\ast e_{1}-v'_{1}-v'_{2}\ast e_{1}-v'_{3}-v_{0}\right).\]
        Hence putting $v_{1}\coloneqq -v'_{1}$, $v_{2}\coloneqq (w'e_{1}^{m-1}-v'_{2})/m$ and $v_{3}\coloneqq -(v'_{3}+v_{0})/m$ gives the desired elements.
    \end{proof}
    \begin{lemma}\label{lem:aux2}
        Let $\phi$ be the $\bbQ$-algebra homomorphism $\fH_{M,\ast}^{0}[S,T]\to\fH_{M,\ast}$ determined by $S\mapsto e_{0}$, $T\mapsto e_{1}$.
        Then $\phi$ is an isomorphism.
    \end{lemma}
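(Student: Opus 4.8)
I would peel off the two variables one at a time. Writing $\fH_{M,\ast}^0[S,T]=\fH_{M,\ast}^0[T][S]$, the map $\phi$ factors as $\phi=\phi_1\circ\phi_0[S]$, where $\phi_0\colon\fH_{M,\ast}^0[T]\to\fH_{M,\ast}^1$ sends $T\mapsto e_1$, where $\phi_1\colon\fH_{M,\ast}^1[S]\to\fH_{M,\ast}$ sends $S\mapsto e_0$ (each acting as the inclusion on coefficients, via $\fH_{M,\ast}^0\subseteq\fH_{M,\ast}^1\subseteq\fH_{M,\ast}$), and where $\phi_0[S]$ is the coefficientwise extension of $\phi_0$ to the polynomial rings in $S$. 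Since extending an isomorphism of commutative rings by one polynomial variable again yields an isomorphism, it suffices to show that $\phi_1$ and $\phi_0$ are isomorphisms. The map $\phi_1$ is elementary: by Lemma \ref{lem:harmonic_e0} we have $e_0^{\ast i}=e_0^i$ and $g\ast e_0^i=e_0^i\ast g=e_0^ig$ for every $g\in\fH_M$, so $\phi_1(gS^i)=e_0^ig$ is ordinary concatenation; since every word of $\fH_M$ arises uniquely by prepending a block $e_0^i$ to a word not beginning with $e_0$, and such words span $\fH_M^1$, we get $\fH_M=\bigoplus_{i\ge0}e_0^i\fH_M^1$ as $\bbQ$-vector spaces, and $\phi_1$ identifies the monomial basis $\{gS^i\}$ (with $g$ a word not beginning with $e_0$) with the word basis of $\fH_M$. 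Hence $\phi_1$ is an isomorphism.

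It remains to treat $\phi_0\colon\fH_{M,\ast}^0[T]\to\fH_{M,\ast}^1$. Surjectivity is immediate from Lemma \ref{lem:aux1}, which expresses a word $w\in\fH_M^1$ as $w=v_1+v_2\ast e_1+v_3$ with $v_1,v_2,v_3\in\fH_M^0$, so $w=\phi_0(v_1+v_3+v_2T)$. For injectivity I would equip $\fH_M$ with the increasing exhaustive filtration $G_\bullet$, where $G_m\fH_M$ is spanned by the words having at most $m$ trailing letters $e_1$; restricted to $\fH_M^1$ this reads $G_m\fH_M^1=\bigoplus_{j=0}^m\fH_M^0e_1^j$, so $G_0\fH_M^1=\fH_M^0$. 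The crux is the claim: for a word $v\in\fH_M^1$ with exactly $t$ trailing $e_1$'s,
\[
v\ast e_1^{\ast m}\in G_{t+m}\fH_M^1
\qquad\text{and}\qquad
v\ast e_1^{\ast m}\equiv\frac{(t+m)!}{t!}\,v\,e_1^m\pmod{G_{t+m-1}\fH_M^1};
\]
in particular $f\ast e_1^{\ast m}\equiv m!\,f\,e_1^m\pmod{G_{m-1}\fH_M^1}$ for $f\in\fH_M^0$. Granting this, suppose $\sum_{j=0}^N f_j\ast e_1^{\ast j}=0$ with $f_j\in\fH_M^0$ and $f_N\neq0$; reducing modulo $G_{N-1}\fH_M^1$ annihilates every term with $j<N$ and leaves $N!\,f_N e_1^N$, which is nonzero modulo $G_{N-1}\fH_M^1$ because $f\mapsto f e_1^N$ is injective and $\fH_M^0 e_1^N\cap G_{N-1}\fH_M^1=0$. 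This contradiction gives injectivity; hence $\phi_0$, and therefore $\phi$, is an isomorphism.

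Finally I would prove the displayed claim by induction on $m$. The case $m=1$ is handled by a secondary induction on the length of $v$: writing $v=s_{z_1,k_1}w$ and applying the product rule in the form \eqref{eq:usual_harmonic}, the summand $s_{z_1,k_1}(w\ast e_1)$ supplies the filtration-maximal part $(t+1)\,v e_1$ by the inner hypothesis, while $s_{z_1,1}v$ and $-s_{z_1,k_1+1}w$ lie in $G_t$; one must still check that prepending $s_{z_1,k_1}$ promotes no lower term to the top level, the only delicate point being a pure power of $e_1$ in $w\ast e_1$, which is ruled out by a weight count (together with the base case $v=e_1^t$, i.e.\ $e_1^t\ast e_1$, computed directly). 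The step from $m-1$ to $m$ then follows from $v\ast e_1^{\ast m}=(v\ast e_1^{\ast(m-1)})\ast e_1$ by applying the $m=1$ statement to $v e_1^{m-1}$ and to each word of the lower-order part of $v\ast e_1^{\ast(m-1)}$, the coefficients collapsing through $(t+1)\cdot\frac{(t+m)!}{(t+1)!}=\frac{(t+m)!}{t!}$. I expect this bookkeeping to be the main obstacle: it demands a careful analysis of how the quasi-shuffle \eqref{eq:usual_harmonic}, including its sign-twisted merge $s_{a,k}\diamond s_{b,l}=-s_{ab,k+l}$ and its correction terms $-s_{ab,k+l}(w\ast w')$, interacts with the trailing-$e_1$ filtration, in particular confirming that piling all new copies of $e_1$ at the very end is the unique filtration-maximal configuration, with leading coefficient exactly $(t+1)(t+2)\cdots(t+m)$.
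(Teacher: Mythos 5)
Your proposal is correct and follows essentially the same route as the paper: factor $\phi$ through $\phi_{1}\colon\fH_{M,\ast}^{1}[S]\to\fH_{M,\ast}$ (handled via Lemma \ref{lem:harmonic_e0} and the decomposition $\fH_{M}=\bigoplus_{i\ge 0}e_{0}^{i}\fH_{M}^{1}$) and $\phi_{0}\colon\fH_{M,\ast}^{0}[T]\to\fH_{M,\ast}^{1}$ (surjectivity from Lemma \ref{lem:aux1}, injectivity by isolating the leading term $d!\,w_{1}e_{1}^{d}$ modulo the trailing-$e_{1}$ filtration $\cG_{d-1}\fH_{M}^{1}$). Your explicit claim $f\ast e_{1}^{\ast m}\equiv m!\,fe_{1}^{m}$ modulo lower filtration is precisely the fact the paper asserts without proof, so your extra bookkeeping only strengthens the argument; the one spot where you are lighter is surjectivity of $\phi_{0}$, where the paper additionally runs an induction on the filtration $\cF_{d}$ because its proof of Lemma \ref{lem:aux1} really produces $v_{2},v_{3}\in\cF_{d-1}\fH_{M}^{1}$ rather than in $\fH_{M}^{0}$.
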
 
    \begin{proof}
        We divide the proof to two parts: we show that
        \begin{enumerate}
            \item\label{iso1} the $\bbQ$-algebra homomorphism $\phi_{1}\colon\fH_{M,\ast}^{1}[S]\to\fH_{M,\ast}$ determined by $S\mapsto e_{0}$ is an isomorphism.
            \item\label{iso2} the $\bbQ$-algebra homomorphism $\phi_{0}\colon\fH_{M,\ast}^{0}[T]\to\fH_{M,\ast}^{1}$ determined by $T\mapsto e_{1}$ is an isomorphism.
        \end{enumerate}
        First we prove the injectivity of $\phi_{1}$.
        Let $f=\sum_{i=0}^{N}w_{i}S^{i}$ ($w_{i}\in\fH_{M}^{1}$) be an arbitrary element of $\fH_{M,\ast}^{1}[S]$.
        Then, from Lemma \ref{lem:harmonic_e0}, we have
        \[\phi_{1}(f)=\sum_{i=0}^{N}w_{i}\ast e_{0}^{\ast i}=\sum_{i=0}^{N}e_{0}^{i}w_{i}\]
        and the right-hand side never be $0$ unless every $w_{i}$ is $0$ because $w_{i}\in\fH_{M}^{1}$.
        The surjectivity of $\phi_{1}$ is immediately follows from Lemma \ref{lem:harmonic_e0}.\\

        Next we prove the injectivity of $\phi_{0}$.
        Recall the definition of $\cG_{m}$ from the proof of Lemma \ref{lem:aux1}.
        Let $f$ be an element of $\fH_{M,\ast}^{0}[T]$, $d$ the degree of $f$ as a polynomial of $T$ and $w_{1}\neq 0$ the coefficient of $f$ at $T^{d}$.
        We prove $\phi_{0}(f)\neq 0$ when $d\ge 1$ because the injectivity for $d=0$ is easy.
        Then, since both $e_{1}^{\ast d}-d!e_{1}^{d}$ and $\phi_{0}(f-w_{1}T^{d})$ are in $\cG_{m-1}\fH_{M}^{1}$, we have $\phi_{0}(f)-d!w_{1}e_{1}^{d}\in\cG_{d-1}\fH_{M}^{1}$.
        Thus the term $d!w_{1}e_{1}^{d}$ appearing in $\phi_{0}(f)$ never be canceled and hence $\phi_{0}(f)\neq 0$.\\

        Finally, let us show the surjectivity of $\phi_{0}$.
        It is sufficient to prove that every word in $w\in\cF_{d}\fH_{M}^{1}$ is written as a polynomial of $e_{1}$ about $\ast$ for any $d\ge 1$.
        This follows from induction on $d$: when it holds for some $d-1$, any $w\in\cF_{d}\fH_{M}^{1}$ has a decomposition $w=v_{1}+v_{2}\ast e_{1}+v_{3}$ as in Lemma \ref{lem:aux1} and each of $v_{2}$ and $v_{3}$ is written as a polynomial of $e_{1}$ by the induction hypothesis.
    \end{proof}
    By Lemma \ref{lem:aux2}, we obtain the unique $\bbQ$-algebra homomorphism $Z_{S,T}\colon\fH_{M,\ast}\to\bbC[S,T]$ which behaves identically on $\fH_{M,\ast}^{0}$ and sends $e_{0}$ to $S$ and $e_{1}$ to $T$.
    We define $Z\coloneqq Z_{0,0}$.
    Then this map satisfies every requirement in Remark \ref{rem:assumption}.
    Indeed, this is a $\bbQ$-algebra homomorphism $\fH_{M,\ast}\to\bbC$ by its construction.
    The first condition \eqref{ass1} also follows from the definition and \eqref{ass2} is obtained by a naive series expansion of the iterated integral.
    The last requirement is well-known in terms of multiple zeta values (e.g.~\cite[Corollary 2.3]{hoffman97}).
    See also Remark \ref{rem:associator}
    \begin{remark}
        As mentioned in Section \ref{sec:introduction}, the iterated integral $I_{0,1}(s_{1,k_{1}}\cdots s_{1,k_{r}})$ coincides with the multiple zeta value $(-1)^{r}\zeta(k_{1},\ldots,k_{r})$.
        For that reason, comparing coefficients and using the harmonic relation (Theorem \ref{thm:harmonic}), we can consider each of the addition formula and Pythagorean theorem as a family of linear relations of multiple zeta values (actually they are equivalent as seen in Theorem \ref{thm:main}).
        For instance, taking the coefficient at $x^{2}$ in the Pythagorean theorem $Z(P_{1}(x))=1$, we obtain $4\zeta(2,2)=3\zeta(4)$.
    \end{remark}
    \begin{remark}\label{rem:associator}
        Let $\bbK$ be a field of characteristic $0$ and $c$ a $\bbQ$-algebra homomorphism $c\colon\bbQ\langle e_{0},e_{1}\rangle\cap\fH_{M,\ast}^{0}\to\bbK$ and extend it to the power series ring.
        By Theorem \ref{thm:main}, if there exists a constant $\mu$ such that $c(w_{1,n})=\mu^{2n}/(2n+1)!$ for any $n\ge 0$, the addition formula and Pythagorean theorem holds for $c(\sS_{1}(x))$.
        Define
        \[\Phi(X_{0},X_{1})\coloneqq\sum_{n=0}^{\infty}\sum_{\substack{a_{1},\ldots,a_{n}\in\{0,1\}}}c(e_{a_{1}}\cdots e_{a_{n}})X_{a_{n}}\cdots X_{a_{1}}\in\bbK\ncjump{X_{0},X_{1}}.\]
        As proved in \cite[Theorem 1]{furusho10}, if $\Phi$ is a commutator group-like series and satisfies the \emph{$5$-cycle relation}, there exists $\mu\in\overline{\bbK}$ (the algebraic closure of $\bbK$) such that $(\Phi,\mu)$ satisfies the \emph{$3$-cycle relation}.
        Moreover, under the same condition, it is showed that $c$ satisfies the \emph{regularized double shuffle relation} in \cite[Theorem 0.2]{furusho11}.
        Since combining the $3$-cycle relation for $(\Phi,\mu)$ and regularized double shuffle relation derives $c(w_{1,n})=\mu^{2n}/(2n+1)!$ (see \cite[Theorem 4.5, Proposition 4.6]{li10}), the special case of the addition formula and Pythagorean theorem follows from the group-likeness and $5$-cycle relation.
        The map $Z$ we constructed above is one such example with $\mu=\pm 2\pi i$ and $\bbK=\overline{\bbK}=\bbC$ as shown in \cite{drinfeld91}. 
        Note that, under these assumptions, this fact can be also proved in a more direct way using \cite[Proposition 4.6]{li10} instead of Theorem \ref{thm:main}, which states that 
        \[c(\sS_{1}(x))=\frac{e^{\mu x}-e^{-\mu x}}{\mu}.\]
    \end{remark}
    \section*{Acknowledgement}
The author would like to thank Prof.~Shin-ichiro Seki for many helpful advices and encouraging him to write this paper.
The author is also grateful to Prof.~Masataka Ono and Yuno Suzuki for their careful reading of the manuscript.\\
    \textbf{Declarations of interest}: none.

\end{document}